\makeatletter \@addtoreset{equation}{section} \makeatother
\newtheorem{theorem}{Theorem}[section]
\newtheorem{lemma}{Lemma}[section]
\newtheorem{remark}{Remark}[section]
\begin{document}

\title{\sc On $p$-Laplacian Kirchhoff-Schr\"{o}dinger-Poisson type systems with critical growth on the Heisenberg group}

\author{\sc
Shujie Bai$^{1}$, 
Yueqiang Song$^{1}$,  
Du\v{s}an D. Repov\v{s}$^{
2,
3,
4,
\thanks{
Corresponding author 
}
\ 
,
\thanks{
E-mail addresses:  
\href{mailto:annawhitebai@163.com}{annawhitebai@163.com},
\href{mailto:lsongyq16@mails.jlu.edu.cn}{songyq16@mails.jlu.edu.cn},
 \href{mailto:dusan.repovs@guest.arnes.si}{dusan.repovs@guest.arnes.si}
}
}$\\
$^{\small\mbox{1}}${\small College of Mathematics, Changchun Normal University,   Changchun, 130032,  P.R. China}\\
$^{\small\mbox{2}}${\small Faculty of Education, University of Ljubljana, Ljubljana, 1000, Slovenia }\\[-0.2cm]
$^{\small\mbox{3}}${\small Faculty of Mathematics and Physics, University of Ljubljana, Ljubljana, 1000, Slovenia }\\[-0.2cm]
$^{\small\mbox{4}}${\small Institute of Mathematics, Physics and Mechanics, Ljubljana, 1000, Slovenia}
}

\date{}
\maketitle

\begin{abstract}
In this article, we investigate the
Kirchhoff-Schr\"{o}dinger-Poisson type systems on the Heisenberg
group of the following form:
\begin{equation*}
\left\{
\begin{array}{lll}
{-(a+b\int_{\Omega}|\nabla_{H}
u|^{p}d\xi)\Delta_{H,p}u-\mu\phi |u|^{p-2}u}=\lambda
|u|^{q-2}u+|u|^{Q^{\ast}-2}u
&\mbox{in}\  \Omega, \\
-\Delta_{H}\phi=|u|^{p} &\mbox{in}\  \Omega, \\
u=\phi=0  &\mbox{on}\ \partial\Omega,
\end{array} \right.
\end{equation*}
where $a,b$ are positive real numbers, $\Omega\subset \mathbb{H}^N$
is a bounded region with smooth boundary, $1<p<Q$, $Q = 2N + 2$ is
the homogeneous dimension of the Heisenberg group $\mathbb{H}^N$,
$Q^{\ast}=\frac{pQ}{Q-p}$, $q\in(2p, Q^{\ast})$, and
$\Delta_{H,p}u=\mbox{div}(|\nabla_{H} u|^{p-2}\nabla_{H} u)$ is the
$p$-horizontal Laplacian. Under some appropriate conditions for the
parameters $\mu$ and $\lambda$, we establish existence and
multiplicity results for the system above. To some extent, we
generalize the results of An and Liu (Israel J. Math., 2020) and Liu
et al. (Adv. Nonlinear Anal., 2022).

\medskip

\emph{\it Keywords:} Kirchhoff-Schr\"{o}dinger-Poisson system;
Heisenberg group; $p$-Laplacian operator; Critical growth;
Concentration-compactness principle.
\medskip

\emph{\it Mathematics Subject Classification (2020):} 35J20; 35R03; 46E35.
\end{abstract}  

\maketitle

\section{Introduction and main results}
In this article, we investigate the following
Kirchhoff-Schr\"{o}dinger-Poisson type systems on the Heisenberg
group:
\begin{equation}\label{1.1}
\left\{
\begin{array}{lll}
{-(a+b\int_{\Omega}|\nabla_{H}
u|^{p}d\xi)\Delta_{H,p}u-\mu\phi |u|^{p-2}u}=\lambda
|u|^{q-2}u+|u|^{Q^{\ast}-2}u
&\mbox{in}\  \Omega, \\
-\Delta_{H}\phi=|u|^{p} &\mbox{in}\  \Omega, \\
u=\phi=0  &\mbox{on}\ \partial\Omega,
\end{array} \right.
\end{equation}
where $a,b$ are positive real numbers, $\Omega\subset \mathbb{H}^N$
is a bounded region with smooth boundary, $1<p<Q$, $Q = 2N + 2$ is
the homogeneous dimension of the Heisenberg group $\mathbb{H}^N$,
$Q^{\ast}=\frac{pQ}{Q-p}$, $q\in(2p, Q^{\ast})$,
$\Delta_{H,p}u=\mbox{div}(|\nabla_{H} u|^{p-2}\nabla_{H} u)$ is
known as the $p$-horizontal Laplacian, and $\mu$ and $\lambda$ are
some positive real parameters.

In recent years, geometrical analysis of the Heisenberg group has
found significant applications in quantum mechanics, partial
differential equations and other fields, which has attracted the
attention of many scholars who tried to establish the existence and
multiplicity of solutions of partial differential equations on the
Heisenberg group. For instance, in the subcase of problem \eqref{1.1}, when
$p=2$ and $b=\mu=0$, the existence of solutions for some nonlinear
elliptic problems in bounded domains has been established. Tyagi
\cite{ty} studied a class of singular boundary value problem on the
Heisenberg group:
\begin{equation}\label{1.2}
\left\{
\begin{array}{lll}-\Delta_{H} u  = \mu\frac{g(\xi)u}{(|z|^4+t^2)^{\frac{1}{2}}} + \lambda f(\xi, t),\quad
\xi\in  \Omega,\smallskip\smallskip\\
u|_{\partial\Omega} = 0,\end{array}\right.
\end{equation}
and under appropriate conditions,  obtained some existence
results by using Bonanno's three critical point theorem.
Goel et al. \cite{Go} dealt with  a class of Choquard type equation
on the Heisenberg group, and they established regularity of solutions and
nonexistence of solutions invoking the mountain pass theorem,
the linking theorem and iteration techniques and boot-strap method.

In the case $b\neq 0$ and $\mu=0$, problem \eqref{1.1} becomes the
Kirchhoff problem, which has also been widely studied. For example, Sun et
al. \cite{sun} dealt with the following Choquard-Kirchhoff problem
with critical growth:
$$M(\|u\|^2)(-\Delta_{H}u+u)= \int_{\mathbb{H}^N}\frac{|u(\eta)|^{Q_\lambda^\ast}}{|\eta^{-1}\xi|^\lambda}d\eta|u|^{Q_\lambda^\ast-2}u+\mu f(\xi,u),$$
where $f$ is a Carath\'{e}odory function, $M$ is the Kirchhoff
function, $\Delta_{H}$ is the Kohn Laplacian on the Heisenberg group
$\mathbb{H}^N$, $\mu>0$ is a parameter, and
$Q_\lambda^\ast=\frac{2Q-\lambda}{Q-2}$ is the critical exponent. In
their paper, a new version of the concentration-compactness
principle on the Heisenberg group was established for the first
time. Moreover, the existence of nontrivial solutions was obtained even
under nondegenerate and degenerate conditions. Zhou et al. \cite{z1}
proved the existence of solutions of Kirchhoff type nonlocal
integral-differential operators with homogeneous Dirichlet boundary
conditions on the Heisenberg group by using the variational method
and the mountain pass theorem.
 Deng and Xian \cite{de}
obtained the existence of solutions for Kirchhoff type systems
involving the
 $Q$-Laplacian operator on  the
 Heisenberg group with the help
of the Trudinger-Moser inequality and the mountain pass theorem. For
more related results, see \cite{Ben, bor, Liu2, pu4, pu5,
pu6, pu2, pu3}.

When $b=0$, $p=2$ and $\mu \neq 0$, problem \eqref{1.1}  becomes the
Schr\"{o}dinger-Poisson system. This is a very interesting subject
and has recently witnessed very profound results. For example,  An et
al. \cite{an} dealt with the following forms of  Schr\"{o}dinger-Poisson type
system
on the Heisenberg group:
\begin{equation*}
    \begin{cases}
    -\Delta_{H}u+\mu\phi u=\lambda|u|^{q-2}u+|u|^{2}u \quad &\mbox{in}\ \Omega,\\
    -\Delta_{H}\phi=u^2\quad &\mbox{in}\ \Omega,\\
    u=\phi=0\quad &\mbox{on}\ \partial \Omega,\\
    \end{cases}
\end{equation*}
where $\mu\in \mathbb{R}$ and $\lambda>0$ are some real
parameters and  $1<q<2$. By applying the concentration
compactness and the critical point theory,
they found at least two positive solutions and a
positive ground state solution.

Liang and Pucci \cite{liang1} studied the  following  critical
Kirchhoff-Poisson system on the Heisenberg group:
\begin{equation*}
\begin{cases}
 -M\left(\int_{\Omega}|\nabla_{H}u|^{2}d\xi\right)\Delta_{H}u+\phi|u|^{q-2}u= h(\xi, u)+ {\lambda}|u|^{2} u\quad  &\mbox{in} \ \Omega,\\
    -\Delta_{H}\phi=|u|^q\quad &\mbox{in}\ \Omega,\\
    u=\phi=0\quad &\mbox{on}\ \partial\Omega,
 \end{cases}
\end{equation*}
where $\Omega\subset \mathbb{H}^1$ is a smooth bounded
domain, $\Delta_H$ is the Kohn-Laplacian on the first Heisenberg group
$\mathbb{H}^1$ and $1 < q < 2$. By applying the symmetric mountain pass
lemma, they obtained the multiplicity of solutions with
$\lambda$ sufficiently small.

The Kirchhoff-Poisson system on the Heisenberg group with logarithmic and critical nonlinearity
was considered by Pucci and Ye \cite{pu3}:
\begin{equation*}
\begin{cases}
 -M\left(\int_{\Omega}|\nabla_{H}u|^{2}d\xi\right)\Delta_{H}u+\phi u=|u|^{2}u+ \lambda|u|^{q-2}u\ln|u|^2\quad  &\mbox{in} \ \Omega,\\
    -\Delta_{H}\phi=|u|^2\quad &\mbox{in}\ \Omega,\\
    u=\phi=0\quad &\mbox{on}\ \partial\Omega.
 \end{cases}
\end{equation*}
Under suitable assumptions on the Kirchhoff function $M$ covering the degenerate case,
they showed that for a sufficiently large $\lambda>0,$ there exists a nontrivial solution to the above problem.

When $p\neq2$ and $\mu \neq 0$, as far as we know, for Kirchhoff-
Schrodinger-Poisson systems \eqref{1.1} with critical nonlinearities
on the Heisenberg group, existence and multiplicity results are not
yet available. In the Euclidean case,  Du et al. \cite{d1} first
studied the existence results for the Kirchhoff-Poisson systems with
$p$-Laplacian under the subcritical case by using the mountain pass
theorem. Later, Du et al.  \cite{d2} studied quasilinear
Schr\"{o}dinger-Poisson systems. For the critical case, Du et al.
\cite{d3}  also obtained the existence of ground state solutions
with the variational approach.

Inspired by the above achievements, we aim to  establish some
results on the existence and multiplicity of nontrivial solutions of
the Kirchhoff-Schr\"{o}dinger-Poisson system \eqref{1.1}. The main
difficulties in dealing with problem \eqref{1.1} are
 the presence
of a nonlocal term and critical nonlinearities making the study of
this problem very challenging.

Before presenting the main results of this article, we first present some concepts of the Heisenberg group.
The Heisenberg group is represented by $\mathbb{H}^N$.
 If $\xi=(x,y,t)\in\mathbb{H}^N$, then the definition of this group operation is
$$\ \tau_{\xi}(\xi')=\xi\circ\xi'=(x+x',y+y',t+t'+2(x'y-y'x)),\    \mbox{for every}  \   \xi,\xi'\in \mathbb{H}^N.$$
$\xi^{-1}=-\xi$ is the inverse, and therefore
$(\xi')^{-1}\circ {\xi}^{-1}=(\xi\circ\xi')^{-1} $.

The definition of a natural group of dilations on $\mathbb{H}^N$ is
$\delta_s(\xi)=(sx,sy,s^2t)$, for every $s>0$. Hence,
$\delta_s(\xi_0\circ\xi) = \delta_s(\xi_0)\circ \delta_s(\xi)$. It
can be easily proved that the Jacobian determinant of dilatations
$\delta_s: \mathbb{H}^N\to \mathbb{H}^N$ is constant and equal to
$s^Q,$ for every $\xi=(x,y,t)\in \mathbb{H}^N$. The critical exponent
is $Q^{*}:= \frac{pQ}{Q-p}$, where the natural number $Q=2N+2$ is
called the homogeneous dimension of $\mathbb{H}^N$. We define the
Kor\'{a}nyi norm as follows
$$|\xi|_{H}=\left[(x^2+y^2)^2+t^2\right]^{\frac{1}{4}},  \  \mbox{for every}  \   \xi\in \mathbb{H}^N, $$
and we derive this norm from the Heisenberg group's anisotropic dilation.
Hence, the homogeneous degree of the Kor\'{a}nyi norm is equal to 1, in terms of dilations
$$\delta_s:(x,y,t)\mapsto(sx,sy,s^2t),  \  \mbox{for every}  \   s>0.$$
The set
$$B_{H}(\xi_{0},r)=\{\xi \in \mathbb{H}^{N}:d_{H}(\xi_0,\xi)<r\}, $$
denotes the  Kor\'{a}nyi open ball of radius $\mbox{r}$ centered at $\xi_{0}$.
For the sake of simplicity, we shall denote $B_{r} = B_{r}(O)$, where $O = (0,0)$ is
the natural origin of $\mathbb{H}^{N}$.

The following vector fields
$$T=\frac{\partial}{\partial t},  \
X_{j}=\frac{\partial}{\partial
x_{j}}+2y_{j}\frac{\partial}{\partial t},  \
Y_{j}=\frac{\partial}{\partial y_{j}}-2x_{j}\frac{\partial}{\partial
t},$$
generate the real Lie algebra of left invariant vector fields for $j=1,\cdots,n$,
which forms a basis satisfying
the Heisenberg regular commutation relation on $\mathbb{H}^{N}$.
This means that
$$[X_{j}, Y_{j}]=-4\delta_{jk}T,  \  [Y_{j},Y_{k}]=[X_{j},X_{k}]=[Y_{j},T]=[X_{j},T]=0.$$
The so-called horizontal vector field is just a vector field with the span of $[X_{j}, Y_{j}]^{n}_{j=1}$.

The Heisenberg gradient on $\mathbb{H}^{N}$ is  
$$\nabla_H =(X_{1},X_{2},\cdots,X_{n},Y_{1},Y_{2},\cdots,Y_{n}),$$
and the Kohn Laplacian on $\mathbb{H}^{N}$ is given by
$$\Delta_{H}=\sum_{j=1}^{N}X_{j}^{2}+Y_{j}^{2}=\sum_{j=1}^{N}[\frac{\partial^{2}}{\partial x_{j}^{2}}+\frac{\partial^{2}}{\partial y_{j}^{2}}+4y_{j}\frac{\partial^{2}}{\partial x_{j}\partial t}-4x_{j}\frac{\partial^{2}}{\partial x_{j}\partial t}+4(x_{j}^{2}+y_{j}^{2})\frac{\partial^{2}}{\partial t^{2}}].$$

The Haar measure is invariant under the left translations of the
Heisenberg group and is $Q$-homogeneous in terms of dilations. More
precisely, it is consistent with the $(2n + 1)$-dimensional Lebesgue measure.
Hence, as shown  Leonardi and Masnou \cite{Leonardi},  the topological dimension
$2N + 1$ of $\mathbb{H}^{N}$ is strictly less than its Hausdorff
dimension $Q = 2N + 2$. Next, $|\Omega|$ denotes the
(2N + 1)-dimensional Lebesgue measure of any measurable set $\Omega\subseteq
\mathbb{H}^{N}$. Hence,
$$|\delta_{s}(\Omega)|=s^{Q}|\Omega|,  \   d(\delta_{s}\xi)=s^{Q}d\xi,
\
\hbox{and}
\
|B_{H}(\xi_{0},r)|=\alpha_{Q}r^{Q},  \  \mbox{where}  \   \alpha_{Q}=|B_{H}(0,1)|.$$

Now, we can state  the main result of the paper.
\begin{theorem}\label{the1.1}
Let  $q\in(2p,  Q^{\ast})$. Then there exist  positive constants
$\mu_1$ and $\lambda_1$ such that for every $\mu\in(0,\mu_1)$ and $\lambda
\in (\lambda_1, +\infty)$, the following assertions hold:
\begin{itemize}
\item[$(I)$]  Problem \eqref{1.1} has a nontrivial weak solution;

\item[$(II)$] Problem \eqref{1.1} has
infinitely many nontrivial weak solutions if  parameter $a$ is
large enough.
\end{itemize}
\end{theorem}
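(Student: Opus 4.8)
The plan is to solve the Poisson component explicitly and reduce \eqref{1.1} to a single quasilinear equation treated by variational methods. Fix the functional setting in the Folland--Stein (horizontal Sobolev) space $W_0^{1,p}(\Omega)$, the completion of $C_0^\infty(\Omega)$ for the norm $\|u\|=(\int_\Omega|\nabla_H u|^p\,d\xi)^{1/p}$. For each $u\in W_0^{1,p}(\Omega)$ the linear problem $-\Delta_H\phi=|u|^p$ in $\Omega$, $\phi=0$ on $\partial\Omega$, has a unique nonnegative solution $\phi_u=(-\Delta_H)^{-1}(|u|^p)$; the map $u\mapsto\phi_u$ is continuous, positively homogeneous of degree $p$, and satisfies $\int_\Omega\phi_u|u|^p\,d\xi=\int_\Omega|\nabla_H\phi_u|^2\,d\xi\ge0$. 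Substituting $\phi_u$ into the first equation, weak solutions of \eqref{1.1} are exactly the critical points of the even $C^1$ functional
\begin{equation*}
I(u)=\frac{a}{p}\|u\|^p+\frac{b}{2p}\|u\|^{2p}-\frac{\mu}{2p}\int_\Omega\phi_u|u|^p\,d\xi-\frac{\lambda}{q}\int_\Omega|u|^q\,d\xi-\frac{1}{Q^\ast}\int_\Omega|u|^{Q^\ast}\,d\xi.
\end{equation*}
A H\"older and Sobolev estimate on $\mathbb{H}^N$ shows that the Poisson term is of lower order and, for $\mu$ small, is controlled by the Kirchhoff term $\frac{b}{2p}\|u\|^{2p}$.

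For part $(I)$ I would first check the mountain pass geometry. Since $2p<q<Q^\ast$ and $\Omega$ is bounded, the embeddings $W_0^{1,p}(\Omega)\hookrightarrow L^r(\Omega)$ for $p\le r\le Q^\ast$ give that for $\|u\|=\rho$ small the leading term $\frac{a}{p}\|u\|^p$ dominates the super-$p$ subcritical, critical and Poisson terms, so $I\ge\alpha>0$ on $\|u\|=\rho$ (with $\rho$ shrinking as $\lambda$ grows); and $I(tu)\to-\infty$ as $t\to+\infty$ for any fixed $u\neq0$ because $Q^\ast>2p$. Boundedness of a Palais--Smale sequence $(u_n)$ at level $c$ follows by estimating $I(u_n)-\tfrac1\theta I'(u_n)[u_n]$ with $\theta\in(2p,q)$: this choice makes the coefficients of $\|u\|^p$ and $\|u\|^{2p}$ positive and those of the $q$- and critical terms nonnegative, while the single negative (Poisson) contribution is absorbed for $\mu$ small, yielding coercivity and $\sup_n\|u_n\|<\infty$.

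The crux is compactness, which fails because of the critical exponent $Q^\ast$. I would establish a local Palais--Smale condition: $(PS)_c$ holds whenever $c<c^\ast$ for a threshold $c^\ast$ depending on $a$, $b$ and the best Sobolev constant $S=\inf_{u\neq0}\|u\|^p\big/\big(\int_\Omega|u|^{Q^\ast}\,d\xi\big)^{p/Q^\ast}$. Given a bounded $(PS)_c$ sequence with $u_n\rightharpoonup u$, I would apply the second concentration--compactness principle on the Heisenberg group to the measures $|\nabla_H u_n|^p$ and $|u_n|^{Q^\ast}$, tracking the Kirchhoff factor $a+b\|u_n\|^p$ through the limit; any concentration at a point forces an energy jump of at least $c^\ast$, so for $c<c^\ast$ no mass escapes and $u_n\to u$ strongly. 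The second ingredient is the strict estimate $c<c^\ast$ for the mountain pass level $c$: testing with a cut-off of the Jerison--Lee extremals $U_\varepsilon$ (the Heisenberg analogue of the Talenti instantons) and computing $\max_{t\ge0}I(tU_\varepsilon)$, the critical and Kirchhoff contributions reproduce $c^\ast$ up to controlled remainders, while the subcritical term $-\frac{\lambda}{q}\int_\Omega|U_\varepsilon|^q\,d\xi$ strictly lowers the maximum once $\lambda$ is large and $\mu$ is small. This is where I expect the main obstacle: the nonlocal Kirchhoff term $b\|u\|^{2p}$ alters the natural scaling and the very shape of $c^\ast$, so the bubble asymptotics on $\mathbb{H}^N$ (relying on the precise decay of $U_\varepsilon$ and of $\phi_{U_\varepsilon}$) must be carried out carefully to stay below $c^\ast$. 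The mountain pass theorem then produces a nontrivial critical point, proving $(I)$.

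For part $(II)$ I would use that $I$ is even and invoke a $\mathbb{Z}_2$-symmetric minimax principle (the symmetric mountain pass/fountain theorem), building a sequence of minimax levels $c_k$ over a nested family of finite-dimensional subspaces of $W_0^{1,p}(\Omega)$, using that on each such subspace the super-$p$ terms drive $I\le0$ far from the origin. Since the local $(PS)_c$ condition is available only below $c^\ast$, and the computation for the model case $b=0$ gives $c^\ast=\frac1Q(aS)^{Q/p}$ (with a Kirchhoff correction when $b>0$), the threshold grows like $a^{Q/p}$; taking $a$ large therefore forces infinitely many of the levels $c_k$ below $c^\ast$, where compactness holds, and each is a critical value. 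This yields infinitely many nontrivial solutions. Throughout, the persistent difficulty is the coexistence of the nonlocal Kirchhoff term with the critical exponent in the compactness step, the remainder being a careful transfer of the Euclidean arguments of Du et al. to the Folland--Stein framework on $\mathbb{H}^N$.
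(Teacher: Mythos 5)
Your overall architecture coincides with the paper's: reduce to the single functional $I_\lambda(u)=J(u,\phi_u)$ via the solution map $u\mapsto\phi_u$ of the Poisson equation, absorb the Poisson term into the Kirchhoff term for $\mu<\mu_1=b/\hat C$, prove a local $(PS)_c$ condition below a threshold via the concentration--compactness principle on $\mathbb{H}^N$, and conclude by the mountain pass theorem for $(I)$ and the symmetric mountain pass theorem (with the threshold growing like $a^{Q/p}$ while the minimax levels grow at most linearly in $a$) for $(II)$. Your remark that the Kirchhoff term ``alters the shape of $c^\ast$'' is resolved in the paper more simply than you anticipate: in the concentration step one only needs $\nu_j\ge a\omega_j$ (the $b$-term has a favorable sign and is discarded), so the threshold is the $a$-only quantity $(\tfrac1q-\tfrac1{Q^\ast})(aS)^{Q/p}$.

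The one step where you genuinely diverge is the verification that the mountain pass level lies below the threshold, and your route has a real problem. You propose testing with cut-offs of the Jerison--Lee extremals $U_\varepsilon$ and expanding $\max_{t\ge0}I_\lambda(tU_\varepsilon)$ \`a la Brezis--Nirenberg. But the Jerison--Lee functions are the extremals of the Folland--Stein inequality only for $p=2$; for the $p$-sub-Laplacian on $\mathbb{H}^N$ with $p\neq2$ no explicit extremals (nor their decay asymptotics) are available, so the bubble expansion you describe cannot be carried out as stated, and even in the Euclidean $p$-Laplacian case it is a heavy computation further complicated here by the terms $b\|u\|^{2p}$ and $\phi_u$. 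The paper avoids all of this: since $\lambda$ is a free parameter, one fixes any $v_1$ with $\|v_1\|=1$, lets $s_\lambda$ realize $\sup_{s\ge0}I_\lambda(sv_1)$, shows from the fibering identity that $\{s_\lambda\}$ is bounded and $s_\lambda\to0$ as $\lambda\to\infty$, hence $\sup_{s\ge0}I_\lambda(sv_1)\to0$, and so for $\lambda>\lambda_1$ the minimax level is below $(\tfrac1q-\tfrac1{Q^\ast})(aS)^{Q/p}$ with no instanton asymptotics whatsoever. You should replace your instanton computation by this elementary argument (it is exactly where the hypothesis $\lambda>\lambda_1$ enters); with that substitution the rest of your plan goes through and matches the paper.
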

We can give the following example for problem \eqref{1.1} with $p=3$
and $\Omega \subset \mathbb{H}^1$:
\begin{equation*}
\left\{
\begin{array}{lll}
{-(a+\int_{\Omega}|\nabla_{H} u|^{2}d\xi)\Delta_{H}u+\mu\phi
u}=\lambda |u|^{6}u+|u|^{10}u
&\mbox{in}\  \Omega, \\
-\Delta_{H}\phi=|u|^{3} &\mbox{in}\  \Omega, \\
u=\phi=0  &\mbox{on}\ \partial\Omega.
\end{array} \right.
\end{equation*}
In this case, $N=1$, $p=3$ and $q=8$, then $Q=2N+2=4$, $Q^\ast=12$.
If positive parameters $\mu$ small enough and $\lambda$ large
enough, by Theorem \ref{the1.1}, we know that problem \eqref{1.1}
has a nontrivial weak solution. Moreover, if in addition the
parameter $a$ is large enough,  problem \eqref{1.1} has infinitely
many nontrivial weak solutions. It should be noted that the methods
in An and Liu \cite{an}
and
Liang and Pucci
\cite{liang1} do not seem to apply to problem \eqref{1.1}.
\begin{remark}
Compared with previous results, this paper has the following key new
features:
\begin{itemize}
\item[$(1)$] The presence of the nonlocal  term $\phi|u|^{p-2}u$;

\item[$(2)$] The lack of compactness caused by critical index;

\item[$(3)$] The presence of the $p$-Laplacian makes this problem more complex and interesting.
\end{itemize}
It is worth stressing that the nonlocal term and the critical
exponent lead to the lack of compactness condition, and we use the
concentration-compactness principle  to overcome this difficulty.
Moreover, we shall  use some more refined estimates to overcome the
presence of the $p$-Laplacian.
\end{remark}

We need to emphasize here, that despite the similarity of some
properties between the classical Laplacian $\Delta$ and Kohn
Laplacian $\Delta_{H}$, similarities can be misleading (see Garofalo and Lanconelli
\cite{ga}), so there are still many properties that deserve further
study. Moreover, for the case $p\neq 2$, it is difficult to prove
the boundedness of Palais-Smale sequences. In order to overcome
these difficulties, we shall  use some more accurate estimates of
relevant expression. Additionally, we shall  use the concentration-compactness
principle on the Heisenberg group to prove the compactness
condition.

The paper is organized as follows. In Section \ref{s2}, we introduce
some notations and known facts. Moreover, we introduce some key
estimates. In addition, we define the corresponding energy
functional $I_{\lambda}$ and its derivative at
$u$, that is, $I_{\lambda}'(u)$. In Section \ref{s3}, we prove
Theorem \ref{the1.1}.

\section{Preliminaries and $(PS)_c$ condition}\label{s2}
\subsection{Preliminaries}

First of all, we collected some known facts, useful in
the sequel. For additional background material, readers are advised
to refer to
Papageorgiou et al. \cite{PPR}.

Let
$$\|u\|_s^{s}=\int_{\Omega}|u|^sd\xi, \    \hbox {for every}  \   u\in\,
L^{s}(\Omega),$$ represent the usual $L^s$-norm.

Following Folland and Stein \cite{fo}, we define the space
$\mathring{S}_1^2(\Omega)$ as the closure of
$C_{0}^{\infty}(\Omega)$ in $S_1^2(\mathbb{H}^{N})$. Then
$\mathring{S}_1^2(\Omega)$ is a Hilbert space with the
norm
$$\|u\|_{\mathring{S}_1^2(\Omega)}^2=\int_{\Omega}|\nabla_{H}u|^2d\xi.$$

We define the Folland-Stein space $S^{1,p}(\Omega)$ as the
closure of $C_0^{\infty}(\Omega)$ with the norm
$$\|u\|=(\int_{\Omega}|\nabla_{H}u|^pd\xi)^{\frac{1}{p}}.$$
Then the embedding
$$S^{1,p}(\Omega)\hookrightarrow L^s(\Omega), \ \mbox{for every}\ s \in (1, Q^{*}), $$ is compact. However, if
$s = Q^{*}$, the embedding is only continuous (see Vassiliev
\cite{DVassilev}).

Additionally, we say
that $(u,\phi)\in S^{1,p}(\Omega)\times S^{1,p}(\Omega)$ is a
solution of problem \eqref{1.1} if and only if
\begin{align*}
\begin{split}
&a\int_{\Omega}|\nabla_{H}u|^{p-2}\nabla_{H}u\nabla_{H}vd\xi+b\|u\|^{p}\int_{\Omega}|\nabla_{H}u|^{p-2}\nabla_{H}u\nabla_{H}vd\xi\\
&\ -\mu\int_{\Omega}\phi |u|^{p-2}uvd\xi-\lambda\int_{\Omega}|u|^{q-2}uvd\xi-\int_{\Omega}|u|^{Q^{*}-2}uvd\xi=0
\end{split}
\end{align*}
and
$$\int_{\Omega}\nabla_{H}\phi\nabla_{H}\omega d\xi-\int_{\Omega}|u|^{p}\omega d\xi=0,$$
for every $v,\omega \in S^{1,p}(\Omega)\times S^{1,p}(\Omega)$. Moreover, $(u,\phi)\in
S^{1,p}(\Omega)\times S^{1,p}(\Omega)$ is a positive solution of
problem \eqref{1.1} if $u$ and $\phi$ are both positive. Therefore,
in order
to apply the critical point theory,
we need to define the functional
$J(u,\phi): S^{1,p}(\Omega)\times S^{1,p}(\Omega)\rightarrow
\mathbb{R}$
as follows
$$J(u,\phi)=\frac{a}{p}\|u\|^{p}+\frac{b}{2p}\|u\|^{2p}+\frac{\mu}{2p}\int_{\Omega}|\nabla_{H}\phi|^2d\xi-\frac{\mu}{p}\int_{\Omega}\phi |u|^{p}d\xi-
\frac{\lambda}{q}\int_{\Omega}|u|^{q}d\xi-\frac{1}{Q^{*}}\int_{\Omega}|u|^{Q^{*}}d\xi,$$
for every $(u,\phi)\in S^{1,p}(\Omega)\times S^{1,p}(\Omega)$. Then
$J$ is $C^{1}$ on $S^{1,p}(\Omega)\times S^{1,p}(\Omega)$ and its
critical points are the solutions of problem \eqref{1.1}. Indeed,
the partial derivatives of $J$ at $(u,\phi)$ are denoted by
$J_u'(u,\phi)$, $J_\phi'(u,\phi)$, namely for every $v,\omega \in
S^{1,p}(\Omega)\times S^{1,p}(\Omega)$,
\begin{align*}
\begin{split}
J_u'(u,\phi)[v]=&a\int_{\Omega}|\nabla_{H}u|^{p-2}\nabla_{H}u\nabla_{H}vd\xi+b\|u\|^{p}\int_{\Omega}|\nabla_{H}u|^{p-2}\nabla_{H}u\nabla_{H}vd\xi\\
&\ -\mu\int_{\Omega}\phi |u|^{p-2}uvd\xi-\lambda\int_{\Omega}|u|^{q-2}uvd\xi-\int_{\Omega}|u|^{Q^{*}-2}uvd\xi=0
\end{split}
\end{align*}
and
$$J_\phi'(u,\phi)=\frac{\mu}{p}\int_{\Omega}\nabla_{H}\phi\nabla_{H}\omega d\xi-\frac{\mu}{p}\int_{\Omega}|u|^{p}\omega d\xi.$$
Standard computations show that $J_u'$ (respectively $J_\phi'$)
continuously maps $S^{1,p}(\Omega)\times S^{1,p}(\Omega)$ into   the
dual of $S^{1,p}(\Omega)$. Moreover, the functional $J$ is $C^{1}$
on $S^{1,p}(\Omega)\times S^{1,p}(\Omega)$ and
$$J_u'(u,\phi)=J_\phi'(u,\phi)=0$$ if and only if $(u,\phi)$ is a solution of problem \eqref{1.1}.
\begin{lemma}\label{lem2.1}
Let $u\in S^{1,p}(\Omega)$. Then there is a unique nonnegative function $\phi_u\in \mathring{S}_1^2(\Omega)$ such that
\begin{equation}\label{2.1}
\left\{
\begin{array}{lll}
-\Delta_{H}\phi=|u|^{p} &\mbox{in}\  \Omega, \\
\phi=0  &\mbox{on}\ \partial\Omega.
\end{array} \right.
\end{equation}
Furthermore, $\phi_u\geq0$ and $\phi_u>0$ if $u\neq0$. Also, \\
(i) $\phi_{tu}=t^{p}\phi_u,$ for every $t>0$;\\
(ii) $\|\phi_u\|_{\mathring{S}_1^2(\Omega)}\leq \hat{C}\|u\|^{p}$, where $\hat{C}>0$;\\
(iii) Let $u_n\rightharpoonup u$ in $S^{1,p}(\Omega)$. Then
$\phi_{u_{n}}\rightharpoonup\phi_u$ in $\mathring{S}_1^2(\Omega)$,
and
\begin{equation}\label{2.2}
\int_\Omega\phi_{u_n} |u_{n}|^{p-2}u_nvd\xi\rightarrow\int_\Omega\phi_{u} |u|^{p-2}uvd\xi,
\
\hbox{for every}
\
v\in S^{1,p}(\Omega).
\end{equation}
\end{lemma}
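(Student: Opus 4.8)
The plan is to treat the second equation in \eqref{1.1} as a \emph{linear} Dirichlet problem in the Hilbert space $\mathring{S}_1^2(\Omega)$ and to read off all the stated properties from the associated solution operator $u\mapsto\phi_u$. First I would fix $u\in S^{1,p}(\Omega)$ and seek $\phi_u\in\mathring{S}_1^2(\Omega)$ satisfying the weak formulation
$$\int_{\Omega}\nabla_{H}\phi_u\nabla_{H}\omega\,d\xi=\int_{\Omega}|u|^{p}\omega\,d\xi\qquad\mbox{for every }\omega\in\mathring{S}_1^2(\Omega).$$
The left-hand side is precisely the inner product of $\mathring{S}_1^2(\Omega)$, so existence and uniqueness follow from the Riesz representation theorem (equivalently Lax--Milgram), provided the right-hand side defines a bounded linear functional on $\mathring{S}_1^2(\Omega)$. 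This boundedness I would extract from the Folland--Stein embedding $S^{1,p}(\Omega)\hookrightarrow L^{Q^{*}}(\Omega)$, which gives $|u|^{p}\in L^{Q^{*}/p}(\Omega)$, combined with the Hilbert-space embedding $\mathring{S}_1^2(\Omega)\hookrightarrow L^{2^{*}}(\Omega)$ and H\"older's inequality.

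Having constructed $\phi_u$, nonnegativity follows by testing with the negative part $\phi_u^{-}:=\min\{\phi_u,0\}\in\mathring{S}_1^2(\Omega)$: since $\int_{\Omega}|\nabla_{H}\phi_u^{-}|^{2}d\xi=\int_{\Omega}|u|^{p}\phi_u^{-}d\xi\leq0$, we get $\phi_u^{-}=0$, i.e. $\phi_u\geq0$. Strict positivity when $u\neq0$ I would deduce from the strong maximum principle for the hypoelliptic operator $-\Delta_{H}$ (Bony's principle), since $|u|^{p}\geq0$ is not identically zero. The homogeneity (i) is immediate from uniqueness: $t^{p}\phi_u$ solves the problem with datum $|tu|^{p}=t^{p}|u|^{p}$, hence coincides with $\phi_{tu}$. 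For the estimate (ii), I would take $\omega=\phi_u$ in the weak formulation and bound the right-hand side by H\"older and the two embeddings above, obtaining $\|\phi_u\|_{\mathring{S}_1^2(\Omega)}^{2}\leq C\|u\|^{p}\|\phi_u\|_{\mathring{S}_1^2(\Omega)}$, from which (ii) follows after dividing.

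The main work, and the step I expect to be most delicate, is the weak continuity (iii). Starting from $u_n\rightharpoonup u$ in $S^{1,p}(\Omega)$, estimate (ii) shows that $\{\phi_{u_n}\}$ is bounded in $\mathring{S}_1^2(\Omega)$, so a subsequence converges weakly to some $\psi$. To identify $\psi=\phi_u$ I would pass to the limit in the weak formulation for $\phi_{u_n}$: the left-hand side converges by weak convergence, while for the right-hand side I would invoke the compact embedding $S^{1,p}(\Omega)\hookrightarrow L^{s}(\Omega)$ for $s<Q^{*}$ to get $u_n\to u$ strongly in $L^{s}$, hence $|u_n|^{p}\to|u|^{p}$ in $L^{s/p}$; uniqueness then forces the whole sequence to converge, giving $\phi_{u_n}\rightharpoonup\phi_u$. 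For the convergence \eqref{2.2}, I would combine three facts: $\phi_{u_n}\to\phi_u$ strongly in $L^{r}(\Omega)$ for $r<2^{*}$ (compactness of $\mathring{S}_1^2(\Omega)\hookrightarrow L^{r}$), strong convergence $|u_n|^{p-2}u_n\to|u|^{p-2}u$ in a suitable $L^{\sigma}(\Omega)$ (from a.e.\ convergence along a subsequence together with equi-integrability), and the fixed factor $v\in L^{Q^{*}}(\Omega)$, so that a three-factor H\"older inequality yields the claim. The subtle point throughout is to verify that the integrability exponents produced by the Folland--Stein and Sobolev embeddings are mutually conjugate, so that every H\"older pairing closes; this is where the interplay between $p$, $Q^{*}$ and $2^{*}$ must be checked carefully.
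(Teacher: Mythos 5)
Your proposal follows essentially the same route as the paper: Lax--Milgram (Riesz) for existence and uniqueness, uniqueness for the homogeneity (i), testing with $\phi_u$ plus H\"older and the Sobolev embedding for (ii), and boundedness of $\{\phi_{u_n}\}$ together with compact embeddings and a.e.\ convergence to pass to the limit in (iii). The only cosmetic differences are that you prove nonnegativity by testing with the negative part rather than citing the maximum principle, and you close \eqref{2.2} by a three-factor H\"older argument where the paper shows $\{\phi_{u_n}|u_n|^{p-2}u_n\}$ is bounded in $L^{\frac{2p}{2p-1}}(\Omega)$ and converges a.e.; both variants carry the same (unverified in the paper as well) exponent bookkeeping that you rightly flag as the delicate point.
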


\begin{proof}
For any $u\in\mathring{S}_1^2(\Omega)$, we define $W: \mathring{S}_1^2(\Omega)\rightarrow\mathbb{R}$,
$$W(v)=\int_{\Omega}v|u|^{p}d\xi,\ \mbox{for every}\ v\in\mathring{S}_1^2(\Omega).$$
Let $v_n\rightarrow v\in\mathring{S}_1^2(\Omega),$ as $n\rightarrow\infty$.
It follows by the H\"{o}lder inequality that
\begin{align*}
\begin{split}
|W(v_n)-W(v)|&\leq\int_{\Omega}(v_n-v)|u|^{p}d\xi\\
&\leq(\int_{\Omega}|v_n-v|^{Q^{*}}d\xi)^{\frac{1}{Q^{*}}}(\int_{\Omega}|u|^{\frac{pQ^{*}}{Q^{*}-1}}d\xi)^{\frac{Q^{*}-1}{Q^{*}}}\\
&\leq
S^{-\frac{1}{p}}\|v_n-v\||u|_{\frac{pQ^{*}}{Q^{*}-1}}^{p}\rightarrow0,\
\mbox{as}\ n\rightarrow\infty,
\end{split}
\end{align*}
where
\begin{equation}\label{2.3}
S=\inf_{u\in
S^{1,p}(\Omega)\setminus\{0\}}\frac{\int_{\Omega}|\nabla_{H}
u|^{p}d\xi}{(\int_{\Omega}|u|^{Q^{*}}d\xi)^{\frac{p}{Q^{*}}}}
\end{equation}
is the best Sobolev constant. This implies that $W$ is a continuous linear functional. Using the
Lax-Milgram theorem, we see that there is a unique
$\phi_u\in\mathring{S}_1^2(\Omega)$ satisfying
\begin{equation}\label{2.4}
\int_{\Omega}\nabla_H\phi_u\nabla_Hvd\xi=\int_{\Omega}v|u|^{p}d\xi,\ \mbox{for every}\ v\in S^{1,p}(\Omega).
\end{equation}
Thus, $\phi_u\in\mathring{S}_1^2(\Omega)$ is the unique solution of problem \eqref{2.1}.
Moreover, applying the maximum principle, one has $\phi_u\geq0$ and $\phi_u>0$ if $u\neq0$.
Indeed, for every $t>0$, one has
$$-\Delta_H\phi_{tu}=t^{p}u^{p}=t^{p}(-\Delta_H\phi_{u})=-\Delta_H(t^{p}\phi_{u}).$$
Hence $\phi_{tu}=t^{p}\phi_{u}$ due to the uniqueness of $\phi_{u}$.

Furthermore, since $\phi_{u}\in S^{1,p}(\Omega)$, we can view it
as a text function in problem \eqref{2.1}.
Then  by \eqref{2.4}, the Sobolev inequality and the H\"{o}lder inequality, we have
(henceforth $C_0$, $C_1$, $C_2$ will denote positive constants)
\begin{align*}
\int_{\Omega}|\nabla_H\phi_u|^{2}d\xi&=\int_{\Omega}\phi_u|u|^{p}d\xi
\leq|\phi_u|_{L^{2}(\Omega)}|u|_{L^{2p}(\Omega)}^{p} \leq
C_1\|\phi_u\|_{\mathring{S}_1^2(\Omega)}\|u\|^{p}.
\end{align*}
Therefore, we get $\|\phi_u\|_{\mathring{S}_1^2(\Omega)}\leq C_1\|u\|^{p}$.

Since $u_n\rightharpoonup u$ in $S^{1,p}(\Omega)$, we can conclude that
$u_n\rightarrow u$ a.e. in $\Omega$ and $\{|u_n|^{p}\}$ is bounded
in $L^{2}(\Omega)$. Moreover, we have
$|u_n|^{p}\rightharpoonup|u|^{p}$ in $L^{2}(\Omega)$. Then  for every
$v\in\mathring{S}_1^2(\Omega)$, it follows that
$$\int_{\Omega}v|u_n|^{p}d\xi\rightarrow\int_{\Omega}v|u|^{p}d\xi,\ \mbox{as}\ n\rightarrow\infty.$$
Therefore, $\phi_{u_{n}}\rightharpoonup\phi_u$ in $\mathring{S}_1^2(\Omega)$.
By the H\"{o}lder inequality, the Sobolev inequality, and (ii), one has
\begin{align*}
\begin{split}
\int_\Omega|\phi_{u_n} |u_{n}|^{p-2}u_n|^{\frac{2p}{2p-1}}d\xi
&\leq|\phi_{u_n}|_{L^{2}(\Omega)}^{\frac{2p}{2p-1}}(\int_\Omega|u_n|^{2p}d\xi)^{\frac{p-1}{2p-1}}\\
&\leq C_0\|\phi_{u_n}\|_{\mathring{S}_1^2(\Omega)}^{\frac{2p}{2p-1}}(\int_\Omega|u_n|^{2p}d\xi)^{\frac{p-1}{2p-1}}\\
&\leq C_2\|\phi_{u_n}\|^{\frac{2p^{2}}{2p-1}}(\int_\Omega|u_n|^{2p}d\xi)^{\frac{p-1}{2p-1}}.
\end{split}
\end{align*}
Hence, $\{\phi_{u_n} |u_{n}|^{p-2}u_n\}$ is bounded in $L^{\frac{2p}{2p-1}}(\Omega)$. Since
$$\phi_{u_n} |u_{n}|^{p-2}u_n\rightarrow\phi_{u} |u|^{p-2}u, \ \mbox{a.e. in}\ \Omega,$$
 we get
\begin{equation*}
\int_\Omega\phi_{u_n} |u_{n}|^{p-2}u_nvd\xi\rightarrow\int_\Omega\phi_{u} |u|^{p-2}uvd\xi,
\
\hbox{for every}
\
v\in S^{1,p}(\Omega).
\end{equation*}
The proof of Lemma \ref{lem2.1} is complete.
\end{proof}

By similar arguments as in An and Liu \cite{an}, we can get the following result.
\begin{lemma}\label{lem2.2}
Let $\Psi(u)=\phi_u$ for every $u\in S^{1,p}(\Omega)$, where $\phi_u$ is as in Lemma \ref{lem2.1},
and let $$\Upsilon=\{(u,\phi)\in S^{1,p}(\Omega)\times S^{1,p}(\Omega):J_\phi'(u,\phi)=0\}.$$
Then $\Psi$ is $C^{1}$ and $\Upsilon$ is the graph of $\Psi$.
\end{lemma}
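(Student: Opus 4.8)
The plan is to establish Lemma \ref{lem2.2} by showing directly that the constraint set $\Upsilon$ is precisely the graph of the map $\Psi$, and then that this map inherits $C^1$ regularity from the variational structure. First I would unpack the defining condition of $\Upsilon$: by the formula for $J_\phi'(u,\phi)$ given just before the lemma, the equation $J_\phi'(u,\phi)=0$ reads
\begin{equation*}
\frac{\mu}{p}\int_{\Omega}\nabla_{H}\phi\nabla_{H}\omega\,d\xi=\frac{\mu}{p}\int_{\Omega}|u|^{p}\omega\,d\xi,\quad\hbox{for every }\omega\in S^{1,p}(\Omega).
\end{equation*}
Since $\mu>0$, this is equivalent to the weak formulation \eqref{2.4} characterizing $\phi_u$. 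By the uniqueness statement in Lemma \ref{lem2.1} (obtained there via the Lax--Milgram theorem), this forces $\phi=\phi_u=\Psi(u)$. Conversely, every pair $(u,\Psi(u))$ satisfies $J_\phi'(u,\Psi(u))=0$ by construction. Hence $\Upsilon=\{(u,\Psi(u)):u\in S^{1,p}(\Omega)\}$, which is exactly the graph of $\Psi$.

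Next I would address the $C^1$ regularity of $\Psi$. The natural route is the implicit function theorem applied to the map $F(u,\phi)=J_\phi'(u,\phi)$, viewed as a map into the dual of $S^{1,p}(\Omega)$. One checks that $F$ is $C^1$ in both arguments (the $\phi$-dependence is linear, and the $u$-dependence enters through $|u|^{p}$, which is $C^1$ since $p>1$), and that the partial derivative $D_\phi F$ at any point is, up to the constant $\mu/p$, the operator induced by the bilinear form $\int_\Omega \nabla_H\phi\,\nabla_H\omega\,d\xi$. This form is coercive and bounded on $\mathring{S}_1^2(\Omega)$, so $D_\phi F$ is a topological isomorphism, again by Lax--Milgram. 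The implicit function theorem then yields that the solution map $u\mapsto\phi_u=\Psi(u)$ is $C^1$.

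Because the statement follows the pattern of An and Liu \cite{an}, an acceptable alternative is to verify $C^1$ regularity of $\Psi$ directly from the weak identity \eqref{2.4}, differentiating with respect to $u$ and using the continuity/boundedness estimates already recorded in Lemma \ref{lem2.1}, in particular $\|\phi_u\|_{\mathring{S}_1^2(\Omega)}\leq\hat C\|u\|^{p}$ and the convergence \eqref{2.2}. The main obstacle I expect is the regularity in the variable $u$ rather than the graph identification, which is essentially immediate from uniqueness: one must ensure that the Nemytskii-type map $u\mapsto|u|^{p}$ is continuously differentiable as a map into the relevant Lebesgue space (so that the composition defining $F$ is genuinely $C^1$), which requires the growth exponent $p$ to be compatible with the Folland--Stein embedding $S^{1,p}(\Omega)\hookrightarrow L^s(\Omega)$ for $s\in(1,Q^*)$ stated in the preliminaries. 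Since $2p<Q^*$ here, this compatibility holds and the differentiability estimate goes through, so the lemma follows.
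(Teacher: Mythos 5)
Your argument is correct and coincides with the standard reduction the paper itself relies on: the paper gives no proof of Lemma \ref{lem2.2}, deferring to An and Liu \cite{an}, whose argument is exactly your two steps (graph identification via the Lax--Milgram uniqueness in Lemma \ref{lem2.1}, and $C^1$ regularity of $\Psi$ via the implicit function theorem applied to $F(u,\phi)=J_\phi'(u,\phi)$, whose $\phi$-derivative is an isomorphism). Your remark that the differentiability of the Nemytskii map $u\mapsto|u|^{p}$ is the only delicate point, and that it is covered because the standing hypothesis $q\in(2p,Q^{*})$ forces $2p<Q^{*}$, is exactly the right observation.
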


We define the corresponding energy functional $I_{\lambda}(u)=J(u,\phi_u)$ of problem \eqref{1.1} by
\begin{equation}\label{2.5}
I_{\lambda}(u)=\frac{a}{p}\|u\|^{p}+\frac{b}{2p}\|u\|^{2p}-\frac{\mu}{2p}\int_{\Omega}\phi_u |u|^{p}d\xi-
\frac{\lambda}{q}\int_{\Omega}|u|^{q}d\xi-\frac{1}{Q^{*}}\int_{\Omega}|u|^{Q^{*}}d\xi,
\
\hbox{for every}
\
u\in S^{1,p}(\Omega).
\end{equation}
Based on the definition of $J$ and Lemma \ref{lem2.2}, we can
conclude that $I_\lambda$ is of $C^{1}$.

\begin{lemma}\label{lem2.3}(see An and Liu \cite{an})
Let $(u,\phi)\in S^{1,p}(\Omega)\times S^{1,p}(\Omega)$. Then
$(u,\phi)$ is a critical point of $J$
if and only if $u$ is a
critical point of $I_{\lambda}$ and $\phi=\Psi(u)$, where $\Psi$
was defined in
Lemma \ref{lem2.2}.
\end{lemma}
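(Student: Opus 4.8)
The plan is to exploit the reduction already set up in Lemma \ref{lem2.2}, namely that the constraint set $\Upsilon=\{(u,\phi):J_\phi'(u,\phi)=0\}$ is precisely the graph of the $C^1$ map $\Psi$, so that eliminating the $\phi$-variable is legitimate. First I would record the identity $I_\lambda(u)=J(u,\Psi(u))$ for every $u\in S^{1,p}(\Omega)$. This is immediate from the definitions of $J$ and $I_\lambda$ once one inserts the test function $v=\phi_u=\Psi(u)$ into the weak formulation \eqref{2.4}, which yields $\int_{\Omega}|\nabla_{H}\phi_u|^2\,d\xi=\int_{\Omega}\phi_u|u|^{p}\,d\xi$; substituting this into $J(u,\phi_u)$ collapses the two terms $\frac{\mu}{2p}\int_{\Omega}|\nabla_{H}\phi_u|^2\,d\xi$ and $-\frac{\mu}{p}\int_{\Omega}\phi_u|u|^{p}\,d\xi$ into the single term $-\frac{\mu}{2p}\int_{\Omega}\phi_u|u|^{p}\,d\xi$ appearing in \eqref{2.5}.

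Next I would differentiate the composition $I_\lambda(u)=J(u,\Psi(u))$. Since $J$ is $C^{1}$ and $\Psi$ is $C^{1}$ by Lemma \ref{lem2.2}, the chain rule gives, for every $v\in S^{1,p}(\Omega)$,
\[
I_\lambda'(u)[v]=J_u'(u,\Psi(u))[v]+J_\phi'(u,\Psi(u))\big[\Psi'(u)[v]\big].
\]
The crucial observation is that, by the very definition of $\Psi$, the pair $(u,\Psi(u))$ lies in $\Upsilon$, so $J_\phi'(u,\Psi(u))=0$ as an element of the dual space; this annihilates the second summand regardless of the value of $\Psi'(u)[v]$. Hence $I_\lambda'(u)=J_u'(u,\Psi(u))$.

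The two implications then follow directly. For the forward direction, if $(u,\phi)$ is a critical point of $J$, then both $J_u'(u,\phi)=0$ and $J_\phi'(u,\phi)=0$. The second equation places $(u,\phi)$ in $\Upsilon$, hence $\phi=\Psi(u)$; combining this with the first equation and the identity just derived yields $I_\lambda'(u)=J_u'(u,\Psi(u))=J_u'(u,\phi)=0$, so $u$ is a critical point of $I_\lambda$. Conversely, if $u$ is a critical point of $I_\lambda$ and $\phi=\Psi(u)$, then $J_\phi'(u,\phi)=0$ holds by the definition of $\Psi$, while $J_u'(u,\phi)=J_u'(u,\Psi(u))=I_\lambda'(u)=0$; thus both partial derivatives of $J$ vanish at $(u,\phi)$, so $(u,\phi)$ is a critical point of $J$.

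I expect the only delicate point to be the rigorous justification of the chain-rule step, which hinges on the $C^{1}$-regularity of $\Psi$ supplied by Lemma \ref{lem2.2} together with the continuity of $J_u'$ and $J_\phi'$ as maps into the dual of $S^{1,p}(\Omega)$; once these are in hand the argument is purely formal, since the cross term vanishes identically on the graph $\Upsilon$. No compactness or refined estimate is required at this stage.
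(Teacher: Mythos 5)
Your argument is correct and is precisely the standard reduction argument: the paper itself offers no proof of Lemma \ref{lem2.3} (it simply cites An and Liu \cite{an}), and the proof given there proceeds exactly as you do, via the identity $I_\lambda(u)=J(u,\Psi(u))$, the chain rule, and the vanishing of $J_\phi'(u,\Psi(u))$ on the graph $\Upsilon$. Your additional verification that the two $\mu$-terms of $J$ collapse into the single term $-\tfrac{\mu}{2p}\int_\Omega \phi_u|u|^p\,d\xi$ of \eqref{2.5} correctly confirms the consistency of the definition $I_\lambda(u)=J(u,\phi_u)$.
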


According to Lemma \ref{lem2.3}, we know that a solution $(u,\phi_{u})$ of
problem \eqref{1.1} corresponds to a critical point $u$ of the
functional $I_\lambda$ with $\phi=\Psi(u)$ and
\begin{align}\label{2.6}
\begin{split}
\langle I_{\lambda}'(u), v\rangle =&a\int_{\Omega}|\nabla_{H}u|^{p-2}\nabla_{H}u\nabla_{H}vd\xi+b\|u\|^{p}\int_{\Omega}|\nabla_{H}u|^{p-2}\nabla_{H}u\nabla_{H}vd\xi\\
&\ -\mu\int_{\Omega}\phi_u |u|^{p-2}uvd\xi-\lambda\int_{\Omega}|u|^{q-2}uvd\xi-\int_{\Omega}|u|^{Q^{*}-2}uvd\xi,
\
\hbox{for every}
\
v\in S^{1,p}(\Omega).
\end{split}
\end{align}
Therefore, based on the above arguments, we shall  strive to use critical point theory
and some analytical techniques to prove the existence of critical points of functional $I_\lambda$.

\subsection{$(PS)_c$ condition}

In this subsection, our main focus will be on proving that the functional $I_{\lambda}$ satisfies the Palais-Smale condition.

\begin{lemma}\label{lem3.1}
Let $q\in(2p, Q^{\ast})$. Then there exists $\mu_1>0$ such that for
any $\mu<\mu_1$, the energy functional $I_\lambda$ satisfies
$(PS)_c$ condition, where
\begin{equation}\label{3.1}
c\in\left(0,\ (\frac{1}{q}-\frac{1}{Q^{*}})(aS)^{\frac{Q}{p}}\right)
\end{equation}
and $S$ is the best Sobolev constant given by \eqref{2.3}.
\end{lemma}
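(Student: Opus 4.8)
The plan is to prove that $I_\lambda$ satisfies the $(PS)_c$ condition for $c$ in the indicated range by the standard three-stage argument adapted to the $p$-Laplacian on the Heisenberg group: (i) show any $(PS)_c$ sequence $\{u_n\}$ is bounded in $S^{1,p}(\Omega)$; (ii) extract a weakly convergent subsequence $u_n \rightharpoonup u$ and apply a concentration-compactness principle on $\mathbb{H}^N$ to control the critical term; and (iii) use the ceiling $c < (\frac1q-\frac1{Q^*})(aS)^{Q/p}$ to exclude concentration of mass at points, forcing strong convergence. First I would establish boundedness: taking the combination $I_\lambda(u_n)-\frac1q\langle I_\lambda'(u_n),u_n\rangle$, the terms involving $\|u_n\|^q$ cancel, and since $q\in(2p,Q^*)$ the coefficients of $\|u_n\|^p$, $\|u_n\|^{2p}$, and $\|u_n\|^{Q^*}$ are all positive (because $\frac1q<\frac1{2p}$ and $\frac1q>\frac1{Q^*}$). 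The only dangerous term is the nonlocal one $\frac{\mu}{2p}\int_\Omega \phi_{u_n}|u_n|^p\,d\xi$; using Lemma~\ref{lem2.1}(ii), this is bounded by $C\mu\|u_n\|^{2p}$, so for $\mu$ small enough the net coefficient of $\|u_n\|^{2p}$ remains positive and boundedness follows.

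Next I would pass to a subsequence with $u_n\rightharpoonup u$ in $S^{1,p}(\Omega)$, $u_n\to u$ in $L^s(\Omega)$ for $s\in(1,Q^*)$, and $u_n\to u$ a.e. By the concentration-compactness principle on the Heisenberg group (the critical exponent version, as used in Sun et al.\ \cite{sun}), there exist an at most countable index set $\mathcal{J}$, points $\{\xi_j\}\subset\bar\Omega$, and nonnegative weights $\{\nu_j\},\{\eta_j\}$ such that, in the sense of measures,
\begin{equation*}
|\nabla_H u_n|^p\,d\xi \rightharpoonup d\eta \geq |\nabla_H u|^p\,d\xi+\sum_{j\in\mathcal{J}}\eta_j\delta_{\xi_j},\qquad |u_n|^{Q^*}\,d\xi\rightharpoonup d\nu=|u|^{Q^*}\,d\xi+\sum_{j\in\mathcal{J}}\nu_j\delta_{\xi_j},
\end{equation*}
with the reverse Sobolev relation $S\nu_j^{p/Q^*}\le\eta_j$ for each $j$. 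Using Lemma~\ref{lem2.1}(iii), the nonlocal term passes to the limit, and the subcritical term converges by compact embedding, so these do not obstruct compactness.

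The crux is then the dichotomy at each concentration point: testing the rescaled equation against cutoffs $\varphi_\varepsilon$ supported near $\xi_j$ and letting $\varepsilon\to0$, I would derive that each $j\in\mathcal{J}$ satisfies either $\nu_j=0$ or $a\eta_j\le \nu_j$ (the Kirchhoff term $b\|u\|^p$ only strengthens this inequality and may be dropped for a lower bound). Combining $a\eta_j\le\nu_j$ with $S\nu_j^{p/Q^*}\le\eta_j$ forces $\nu_j\ge(aS)^{Q/p}$ whenever $\nu_j>0$. Assuming some $\nu_{j_0}>0$, I would then estimate the energy level from below along the sequence using $c=I_\lambda(u_n)-\frac1q\langle I_\lambda'(u_n),u_n\rangle+o(1)$, which, after discarding nonnegative contributions, yields $c\ge(\frac1q-\frac1{Q^*})\nu_{j_0}\ge(\frac1q-\frac1{Q^*})(aS)^{Q/p}$, contradicting the hypothesis \eqref{3.1}. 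Hence $\mathcal{J}=\emptyset$, so $\int_\Omega|u_n|^{Q^*}\,d\xi\to\int_\Omega|u|^{Q^*}\,d\xi$, giving strong convergence of the critical term; feeding this back into $\langle I_\lambda'(u_n),u_n-u\rangle\to0$ together with a standard $(S_+)$-type argument for the $p$-horizontal Laplacian yields $u_n\to u$ strongly in $S^{1,p}(\Omega)$.

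The main obstacle I anticipate is the $(S_+)$ convergence step for the $p$-Laplacian combined with the Kirchhoff coefficient: unlike the Hilbertian case $p=2$, one cannot simply subtract inner products, and one must exploit the Simon-type inequalities for $|\nabla_H u_n|^{p-2}\nabla_H u_n$ together with the fact that $a+b\|u_n\|^p$ stays bounded away from zero to conclude $\|\nabla_H(u_n-u)\|_p\to0$. Handling the nonlocal term $\phi_{u_n}|u_n|^{p-2}u_n$ cleanly in this limit (via Lemma~\ref{lem2.1}(iii)) and verifying that the concentration estimate $a\eta_j\le\nu_j$ is unaffected by the $\mu$-term also require care, since the test-function localization must be shown to annihilate the subcritical and Poisson contributions as $\varepsilon\to0$.
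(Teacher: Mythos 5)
Your proposal is correct and follows essentially the same route as the paper: boundedness via the combination $I_\lambda(u_n)-\frac{1}{q}\langle I_\lambda'(u_n),u_n\rangle$ with the nonlocal term absorbed by Lemma \ref{lem2.1}(ii) for $\mu<\mu_1=b/\hat{C}$, the concentration-compactness principle on $\mathbb{H}^N$, cutoff testing to get $\nu_j\geq a\omega_j$ and hence $\nu_j=0$ or $\nu_j\geq(aS)^{Q/p}$, and the energy ceiling \eqref{3.1} to rule out the latter. The only divergence is the final strong-convergence step, where you invoke an $(S_+)$/Simon-inequality argument while the paper instead passes to the limit in $\langle I_\lambda'(u_n),u\rangle$ and $\langle I_\lambda'(u_n),u_n\rangle$ to deduce $\|u_n\|\to\|u\|$ and concludes by uniform convexity of $S^{1,p}(\Omega)$; both are standard, and your route arguably handles the $p\neq 2$ case more transparently since it avoids identifying the weak limit of $|\nabla_H u_n|^{p-2}\nabla_H u_n$.
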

\begin{proof}
Let us assume that $\{u_{n}\}_{n}\subset S^{1,p}(\Omega)$ is a $(PS)_{c}$ sequence related to the
functional $I_{\lambda}$, that is,
\begin{equation}\label{3.2}
I_{\lambda}(u_{n})\rightarrow c\ \mbox{and}\ I_{\lambda}'(u_{n})\rightarrow0, \mbox{as}\ n\rightarrow\infty.
\end{equation}
It follows that
\begin{align}\label{3.3}
c+o(1)\|u_{n}\|&\nonumber=I_{\lambda}(u_{n})-\frac{1}{q}I_{\lambda}'(u_{n})u_{n}\\
&\nonumber\geq
a(\frac{1}{p}-\frac{1}{q})\|u_n\|^{p}+b(\frac{1}{2p}-\frac{1}{q})\|u_n\|^{2p}-\mu(\frac{1}{2p}-\frac{1}{q})\int_{\Omega}\phi_{u_n}|u_n|^{p}d\xi
+(\frac{1}{q}-\frac{1}{Q^{*}})\int_{\Omega}|u_n|^{Q^{*}}d\xi\\
&\geq a(\frac{1}{p}-\frac{1}{q})\|u_n\|^{p}+(b-\mu
\hat{C})(\frac{1}{2p}-\frac{1}{q})\|u_n\|^{2p},
\end{align}
where $\hat{C}$ is a positive constant given by Lemma
\ref{lem2.1}(ii).
Let $\mu_1 = \frac{b}{\hat{C}}$. By \eqref{3.3}, we
know that $(PS)_{c}$ sequence $\{u_{n}\}_{n}\subset S^{1,p}(\Omega)$
is bounded for every $\mu < \mu_1$. Thus, we may assume that
$u_n\rightharpoonup u$ weakly in $S^{1,p}(\Omega)$, and
$u_n\rightarrow u$ in $L^{s}(\Omega)$ with $1<s<Q^{*}$. Furthermore,
since $I_\lambda(u_n)=I_\lambda(|u_n|)$, we may also assume that
$u_n\geq0$ and $u\geq0$. Therefore, invoking the concentration compactness
principle on the Heisenberg group (see
Vassiliev
 \cite[Lemma 3.5]{DVassilev}),
we obtain
\begin{align}\label{3.4}
\begin{split}
&|\nabla_{H}u_n|^{p}d\xi\rightharpoonup d\omega\geq|\nabla_{H}u|^{p}d\xi+\Sigma_{j\in\Lambda}\omega_j\delta_{x_j},\\
&|u_n|^{Q^{*}}d\xi\rightharpoonup
d\nu=|u|^{Q^{*}}d\xi+\Sigma_{j\in\Lambda}\nu_j\delta_{x_j},
\end{split}
\end{align}
where $\{x_j\}_{j\in\Lambda}\subset\Omega$
is the most a
countable
set of distinct points, $\omega$ and $\nu$ in $\mathbb{H}^{N}$ are
two positive Radon measures, and $\{\omega_j\}_{j\in\Lambda}$,
$\{\nu_j\}_{j\in\Lambda}$ are nonnegative numbers. Moreover, we have
\begin{equation}\label{3.5}
\omega_j\geq S\nu_j^{\frac{p}{Q^{*}}}.
\end{equation}
Next, we
shall
 show that $\Lambda=\emptyset$.
Indeed,  assume that the hypothesis $\omega_j\neq0$ holds for some $j\in\Lambda$.
Then when
$\varepsilon>0$ is sufficiently small, we can find
$0\leq\psi_{\varepsilon,j}\leq1$ satisfying the following
\begin{equation}\label{3.6}
\begin{cases}
\psi_{\varepsilon,j}=1 &\mbox{in}   \   B_{H}(\xi_{j},\frac{\varepsilon}{2}),\\
\psi_{\varepsilon,j}=0 &\mbox{in}   \   \Omega\backslash B_{H}(\xi_{j},\varepsilon),\\
|\nabla_{H}\psi_{\varepsilon,j}|\leq\frac{2}{\varepsilon},
\end{cases}
\end{equation}
where $\psi_{\varepsilon,j}\in
C_{0}^{\infty}(B_{H}(\xi_{j},\varepsilon))$
is a cut-off function.
Clearly, $(u_n\psi_{\varepsilon,j})_n$ is bounded in $S^{1,p}(\Omega)$.
It follows from \eqref{3.2} and the boundedness of $(u_n\psi_{\varepsilon,j})_n$ that
$$\langle I'_\lambda(u_n),u_n\psi_{\varepsilon,j}\rangle\rightarrow 0, \ \mbox{as}\ n\rightarrow\infty,$$
that is,
\begin{align}\label{3.7}
&a(\int_{\Omega}|\nabla_{H}u_n|^{p}\psi_{\varepsilon,j}+\int_{\Omega}u_n|\nabla_{H}u_n|^{p-2}\nabla_{H}u_n\nabla_{H}\psi_{\varepsilon,j}d\xi)
+b\|u_n\|^{p}(\int_{\Omega}|\nabla_{H}u_n|^{p}\psi_{\varepsilon,j}\\
&\nonumber
+\int_{\Omega}u_n|\nabla_{H}u_n|^{p-2}\nabla_{H}u_n\nabla_{H}\psi_{\varepsilon,j}d\xi)
-\mu\int_{\Omega}\phi_{u_n}|u_{n}|^{p}\psi_{\varepsilon,j}d\xi
=\lambda\int_{\Omega}u_{n}^{q}\psi_{\varepsilon,j}d\xi+\int_{\Omega}u_{n}^{Q^{*}}\psi_{\varepsilon,j}d\xi+o(1).
\end{align}
It follows from the dominated convergence theorem that
\begin{equation*}
\int_{B_{H}(\xi_{j},\varepsilon)}|u_{n}|^{q}\psi_{\varepsilon,j}d\xi
\rightarrow\int_{B_{H}(\xi_{j},\varepsilon)}|u|^{q}\psi_{\varepsilon,j}d\xi, \,
\ \mbox{as}\ n\rightarrow\infty.
\end{equation*}
Hence, letting $\varepsilon\rightarrow0$, we get
\begin{equation}\label{3.8}
\lim_{\varepsilon\rightarrow0}\lim_{n\rightarrow\infty}\int_{B_{H}(\xi_{j},\varepsilon)}|u_{n}|^{q}\psi_{\varepsilon,j}d\xi=0.
\end{equation}
By Lemma \ref{lem2.1},
\begin{equation}\label{3.9}
\lim_{n\rightarrow\infty}\int_{\Omega}\phi_{u_{n}}|u_n|^{p-2}u_nud\xi=
\int_{\Omega}\phi_{u}|u|^{p}d\xi,
\end{equation}
and since $u_n\rightarrow u$ in $L^{s}(\Omega)$ with $1<s<Q^{*}$, one has
\begin{align}\label{3.10}
\begin{split}
\int_{\Omega}(\phi_{u_{n}}|u_n|^{p}-\phi_{u_{n}}|u_n|^{p-1}u)d\xi&\leq\int_{\Omega}|\phi_{u_{n}}||u_{n}|^{p-1}|u_n-u|d\xi\\
&\leq\left(\int_{\Omega}|\phi_{u_{n}}|u_{n}|^{p-1}|^{\frac{p}{p-1}}d\xi\right)^{\frac{p-1}{p}}|u_n-u|_{p}
\rightarrow0.
\end{split}
\end{align}
Combining
 \eqref{3.9} with \eqref{3.10}, we obtain that
\begin{equation*}
\lim_{n\rightarrow\infty}\int_{\Omega}\phi_{u_{n}}|u_n|^{p}d\xi=
\int_{\Omega}\phi_{u}|u|^{p}d\xi,
\end{equation*}
thus
\begin{equation}\label{3.11}
\lim_{\varepsilon\rightarrow0}\lim_{n\rightarrow\infty}\int_{B_{H}(\xi_{j},\varepsilon)}\phi_{u_n}|u_n|^{p}\psi_{\varepsilon,j}d\xi=
\lim_{\varepsilon\rightarrow0}\int_{B_{H}(\xi_{j},\varepsilon)}\phi_{u}|u|^{p}\psi_{\varepsilon,j}d\xi=0.
\end{equation}
Since
\begin{equation*}
\int_{B_{H}(\xi_{j},\varepsilon)}d\xi=\int_{B_{H}(0,\varepsilon)}d\xi=|B_H(0,1)|\varepsilon^{Q},
\end{equation*}
applying the H\"{o}lder inequality, we obtain
\begin{align}\label{3.12}
\lim_{\varepsilon\rightarrow0}\lim_{n\rightarrow\infty}\int_{\Omega}u_n|\nabla_{H}u_n|^{p-1}\nabla_{H}\psi_{\varepsilon,j}d\xi&\nonumber\leq
\lim_{\varepsilon\rightarrow0}\lim_{n\rightarrow\infty}(\int_{B_{H}(\xi_{j},\varepsilon)}|\nabla_{H}u_n|^{p}d\xi)^{\frac{p-1}{p}}
(\int_{B_{H}(\xi_{j},\varepsilon)}|u_n\nabla_{H}\psi_{\varepsilon,j}|^{p}d\xi)^{\frac{1}{p}}\\
&\nonumber\leq C\lim_{\varepsilon\rightarrow0}(\int_{B_{H}(\xi_{j},\varepsilon)}|u_n|^{p}|\nabla_{H}\psi_{\varepsilon,j}|^{p}d\xi)^{\frac{1}{p}}\\
&\nonumber\leq
C\lim_{\varepsilon\rightarrow0}(\int_{B_{H}(\xi_{j},\varepsilon)}|u_n|^{Q^{*}}d\xi)^{\frac{1}{Q^{*}}}
(\int_{B_{H}(\xi_{j},\varepsilon)}|\nabla_{H}\psi_{\varepsilon,j}|^{Q}d\xi)^{\frac{1}{Q}}=0.
\end{align}
By \eqref{3.4}, we have
\begin{equation}\label{3.13}
\lim_{\varepsilon\rightarrow0}\lim_{n\rightarrow\infty}\int_{\Omega}|\nabla_{H}u_n|^{p}\psi_{\varepsilon,j}d\xi\geq
\lim_{\varepsilon\rightarrow0}\left(\omega_j+\int_{B_{H}(\xi_{j},\varepsilon)}|\nabla_{H}u|^{p}\psi_{\varepsilon,j}d\xi\right)=\omega_j
\end{equation}
and
\begin{equation}\label{3.14}
\lim_{\varepsilon\rightarrow0}\lim_{n\rightarrow\infty}\int_{\Omega}u_{n}^{Q^{*}}\psi_{\varepsilon,j}d\xi=
\lim_{\varepsilon\rightarrow0}\left(\nu_j+\int_{B_{H}(\xi_{j},\varepsilon)}u_{n}^{Q^{*}}\psi_{\varepsilon,j}d\xi\right)=\nu_j.
\end{equation}
Therefore, by \eqref{3.7}-\eqref{3.14}, one gets $\nu_j\geq
a\omega_j$. It follows from \eqref{3.5} that $$\nu_j = 0
\quad\mbox{or}\quad \nu_j\geq (aS)^{\frac{Q}{p}}.$$ In fact, if
$\nu_j\geq (aS)^{\frac{Q}{p}}$ holds, therefore
 by \eqref{3.2} and
\eqref{3.4}, for every $\mu < \mu_1$, we have
\begin{align}\label{3.15}
\begin{split}
c&=\lim_{n\rightarrow\infty}\{I_{\lambda}(u_{n})-\frac{1}{q}I_{\lambda}'(u_{n})u_{n}\}\geq\lim_{n\rightarrow\infty}(\frac{1}{q}-\frac{1}{Q^{*}})\int_{\Omega}|u_n|^{Q^{*}}d\xi\\
&\geq (\frac{1}{q}-\frac{1}{Q^{*}})\nu_j \geq
(\frac{1}{q}-\frac{1}{Q^{*}})(aS)^{\frac{Q}{p}},
\end{split}
\end{align}
which contradicts \eqref{3.1}. Thus, $\Lambda=\emptyset$.
By \eqref{3.4} and $\Lambda=\emptyset$, we have
\begin{equation}\label{3.16}
\int_{\Omega}|u_n|^{Q^{*}}d\xi\rightarrow\int_{\Omega}|u|^{Q^{*}}d\xi.
\end{equation}

Let $$\lim_{n\rightarrow\infty}\|u_n\|^{p}=A.$$ If $A=0$, then
$u_n\rightarrow0$ in $S^{1,p}(\Omega)$. So assume now that $A>0$. By
\eqref{3.2}, we get
\begin{align}\label{3.17}
\begin{split}
&a\int_{\Omega}|\nabla_{H}u_n|^{p-2}\nabla_{H}u_n\nabla_{H}vd\xi+b A\int_{\Omega}|\nabla_{H}u_n|^{p-2}\nabla_{H}u_n\nabla_{H}vd\xi\\
&\ -\mu\int_{\Omega}\phi_{u_{n}}
|u_n|^{p-2}u_nvd\xi-\lambda\int_{\Omega}|u_n|^{q-2}u_nvd\xi-\int_{\Omega}|u_n|^{Q^{*}-2}u_n
vd\xi=o(1).
\end{split}
\end{align}
Let $v=u$ in \eqref{3.17}. Then
\begin{equation}\label{3.18}
a\|u\|^{p}+bA\|u\|^{p}-\mu\int_{\Omega}\phi_{u}|u|^{p}d\xi-
\lambda\int_{\Omega}|u|^{q}d\xi-\int_{\Omega}|u|^{Q^{*}}d\xi=0.
\end{equation}
By \eqref{3.2}, \eqref{3.4}, \eqref{3.16} and Lemma \ref{lem2.1}, one also has
\begin{equation}\label{3.19}
\lim_{n\rightarrow\infty}a\|u_n\|^{p}+bA\|u_n\|^{p}-\mu\int_{\Omega}\phi_{u}|u|^{p}d\xi-
\lambda\int_{\Omega}|u|^{q}d\xi-\int_{\Omega}|u|^{Q^{*}}d\xi=0.
\end{equation}
Thus, combining \eqref{3.18} and \eqref{3.19}, we get $\lim_{n\rightarrow\infty}\|u_n\|^{p}=\|u\|^{p}$.
Thus, we see that $u_n\rightarrow u$ in $S^{1,p}(\Omega)$ by the uniform convexity of $S^{1,p}(\Omega)$.
This completes the proof of Lemma \ref{lem3.1}.
\end{proof}

\section{Proof of Theorem \ref{the1.1}}\label{s3}

\subsection{Existence of a nontrivial weak solutions}

We need the following auxiliary
lemmas to prove our main result.
\begin{lemma}\label{lem3.2}
Let $q\in(2p, Q^{\ast})$ and $\mu\in(0,\mu_1)$. Then  functional
$I_{\lambda}$ satisfies the mountain
pass geometry, that is,\\
(i) There exist constants $\rho,\alpha>0$ satisfying $I_{\lambda}(u)|_{\partial B_{\rho}}\geq \alpha,$ for every $u\in S^{1,p}(\Omega)$;\\
(ii) There exists $e\in S^{1,p}(\Omega)\backslash\overline{B_{\rho}}$ satisfying $I_\lambda(e)<0$.
\end{lemma}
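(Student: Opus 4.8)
The plan is to reduce $I_\lambda$ to a one-variable estimate in $t=\|u\|$ and to read off both geometric conditions from the ordering of exponents $p<2p<q<Q^{*}$, using only the facts already established, especially Lemma \ref{lem2.1} and the best-constant inequality \eqref{2.3}. First I would dispose of the nonlocal term once and for all: testing \eqref{2.4} with $v=\phi_u$ gives $\int_\Omega \phi_u|u|^p\,d\xi=\|\phi_u\|_{\mathring{S}_1^2(\Omega)}^{2}$, which by Lemma \ref{lem2.1}(ii) is bounded by $\hat{C}\|u\|^{2p}$. Since $\mu<\mu_1=b/\hat{C}$, the two order-$2p$ terms combine nonnegatively, $\frac{b}{2p}\|u\|^{2p}-\frac{\mu}{2p}\int_\Omega\phi_u|u|^p\,d\xi\ge\frac{b-\mu\hat{C}}{2p}\|u\|^{2p}\ge0$, so the Poisson contribution never hurts.

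For part (i) I would then bound the two remaining negative terms: the continuous subcritical embedding gives $\int_\Omega|u|^q\,d\xi\le C_q\|u\|^{q}$, while \eqref{2.3} gives $\int_\Omega|u|^{Q^{*}}\,d\xi\le S^{-Q^{*}/p}\|u\|^{Q^{*}}$. Hence $I_\lambda(u)\ge\frac{a}{p}\|u\|^{p}-\frac{\lambda C_q}{q}\|u\|^{q}-\frac{1}{Q^{*}}S^{-Q^{*}/p}\|u\|^{Q^{*}}=\|u\|^{p}\bigl(\frac{a}{p}-\frac{\lambda C_q}{q}\|u\|^{q-p}-\frac{1}{Q^{*}}S^{-Q^{*}/p}\|u\|^{Q^{*}-p}\bigr)$. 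Because $q-p>0$ and $Q^{*}-p>0$, the bracketed factor tends to $a/p>0$ as $\|u\|\to0$, so there is $\rho>0$ small with the bracket $\ge a/(2p)$ on $\partial B_\rho$, yielding $I_\lambda(u)\ge\frac{a}{2p}\rho^{p}=:\alpha>0$ there.

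For part (ii) I would fix any $u_0\in S^{1,p}(\Omega)\setminus\{0\}$ and study $t\mapsto I_\lambda(tu_0)$ for $t>0$. By the homogeneity $\phi_{tu_0}=t^{p}\phi_{u_0}$ from Lemma \ref{lem2.1}(i), the Poisson integral scales like $t^{2p}$, so $I_\lambda(tu_0)$ is a finite combination of $t^{p},t^{2p},t^{q},t^{Q^{*}}$ in which the top power $t^{Q^{*}}$ carries the negative coefficient $-\frac{1}{Q^{*}}\|u_0\|_{Q^{*}}^{Q^{*}}$. Since $q\in(2p,Q^{*})$ forces $Q^{*}>2p>p$ and $Q^{*}>q$, the critical term dominates all others, so $I_\lambda(tu_0)\to-\infty$; choosing $t_0$ large enough that $I_\lambda(t_0u_0)<0$ and $t_0\|u_0\|>\rho$ and setting $e=t_0u_0$ finishes.

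The computation is elementary, and the only two points needing care are structural rather than technical. In part (i) the hypothesis $\mu<\mu_1$ is exactly what prevents the negative Poisson term, which scales like $\|u\|^{2p}$, from overwhelming the leading $\frac{a}{p}\|u\|^{p}$ near the origin. In part (ii) the decisive fact is the strict inequality $Q^{*}>2p$, guaranteed by the nonemptiness of the range $q\in(2p,Q^{*})$: it ensures the critical power beats the Kirchhoff power $t^{2p}$, so coercivity fails in the large and the functional plunges to $-\infty$. No compactness or concentration input is required at this stage; that machinery is reserved for the $(PS)_c$ analysis of Lemma \ref{lem3.1}.
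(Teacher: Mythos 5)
Your proposal is correct and follows essentially the same route as the paper: the same $\mu<\mu_1$ absorption of the Poisson term into the Kirchhoff term, the same Sobolev/embedding bounds reducing (i) to a one-variable bracket tending to $a/p$ at the origin, and the same domination of $t^{Q^{*}}$ over $t^{2p}$ for (ii). The only cosmetic differences are that you make explicit the identity $\int_{\Omega}\phi_u|u|^p\,d\xi=\|\phi_u\|_{\mathring{S}_1^2(\Omega)}^2$ (which, strictly, yields the bound $\hat{C}^2\|u\|^{2p}$ rather than $\hat{C}\|u\|^{2p}$ --- the same harmless constant-labelling the paper itself uses) and that you use a generic embedding constant $C_q$ where the paper writes the explicit H\"older--Sobolev constant.
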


\begin{proof}
First, applying the H\"{o}lder inequality, we get
\begin{align}\label{3.21}
\begin{split}
I_{\lambda}(u)&=\frac{a}{p}\|u\|^{p}+\frac{b}{2p}\|u\|^{2p}-\frac{\mu}{2p}\int_{\Omega}\phi_{u}|u|^{p}d\xi-
\frac{\lambda}{q}\int_{\Omega}|u|^{q}d\xi-\frac{1}{Q^{*}}\int_{\Omega}|u|^{Q^{*}}d\xi\\
&\geq\frac{a}{p}\|u\|^{p}+\frac{b-\mu \hat{C}}{2p}\|u\|^{2p}-\frac{\lambda}{q}\int_{\Omega}|u|^{q}d\xi-\frac{1}{Q^{*}}\int_{\Omega}|u|^{Q^{*}}d\xi\\
&\geq\frac{a}{p}\|u\|^{p}-\frac{\lambda}{q}S^{-\frac{q}{p}}|\Omega|^{\frac{Q^{*}-q}{Q^{*}}}\|u\|^{q}-\frac{1}{Q^{*}}S^{-\frac{Q^{*}}{p}}\|u\|^{Q^{*}}\\
&=\|u\|^{p}\{\frac{a}{p}-\frac{\lambda}{q}S^{-\frac{q}{p}}|\Omega|^{\frac{Q^{*}-q}{Q^{*}}}\|u\|^{q-p}-\frac{1}{Q^{*}}S^{-\frac{Q^{*}}{p}}\|u\|^{Q^{*}-p}
\}.
\end{split}
\end{align}
Let
$$f(t)=\frac{a}{p}-\frac{\lambda}{q}S^{-\frac{q}{p}}|\Omega|^{\frac{Q^{*}-q}{Q^{*}}}t^{q-p}-\frac{1}{Q^{*}}S^{-\frac{Q^{*}}{p}}t^{Q^{*}-p}, \ \hbox{for every} \  t\geq0.$$
We now show that there exists a
constant $\rho>0$ satisfying
$f(\rho)\geq \frac{a}{p}$. We see that $f$ is a continuous function
on $[0,+\infty)$ and
$\lim_{t\rightarrow0^{+}}f(t)=\frac{a}{p}.$ Hence there exists
$\rho$ such that $f(t)\geq \frac{a}{p}-\varepsilon_1,$ for every $0\leq
t\leq \rho$, where $\rho$ is small enough such that $\|u\|=\rho$. If
we choose $\varepsilon_1=\frac{a}{2p}$, we have $f(t)\geq
\frac{a}{2p},$ for every $0\leq t\leq \rho$. In particular,
$f(\rho)\geq \frac{a}{2p}$ and we obtain $I_{\lambda}(u)\geq
\frac{a}{2p}\rho^{p}=\alpha$ for $\|u\|=\rho$. Hence assertion (i)
of Lemma \ref{lem3.2} holds.

Next, we shall show that assertion (ii) of Lemma \ref{lem3.2} also holds:
\begin{align}\label{3.22}
\begin{split}
I_{\lambda}(su)&=\frac{as^{p}}{p}\|u\|^{p}+\frac{bs^{2p}}{2p}\|u\|^{2p}-\frac{\mu s^{2p}}{2p}\int_{\Omega}\phi_{u}|u|^{p}d\xi-
\frac{\lambda s^{q}}{q}\int_{\Omega}|u|^{q}d\xi-\frac{s^{Q^{*}}}{Q^{*}}\int_{\Omega}|u|^{Q^{*}}d\xi\\
&\leq\frac{as^{p}}{p}\|u\|^{p}+\frac{bs^{2p}}{2p}\|u\|^{2p}-\frac{s^{Q^{*}}}{Q^{*}}\int_{\Omega}|u|^{Q^{*}}d\xi
\rightarrow -\infty  \   \rm{as}  \   s\rightarrow +\infty.
\end{split}
\end{align}
Thus, we can deduce that $I_{\lambda}(s_{0}u)<0$ and
$s_{0}\|u\|>\rho,$ for every $s_{0}$ large enough. Let $e=s_{0}u$.
Then $e$ is the desired function and the proof of (ii) of Lemma
\ref{lem3.2} is complete.
\end{proof}

\begin{proof}[Proof of Theorem \ref{the1.1}(I)]

We claim that
\begin{equation}\label{3.23}
0<c_{\lambda}=\inf_{h\in\Gamma}\max_{0\leq
s\leq1}I_{\lambda}(h(s))<(\frac{1}{q}-\frac{1}{Q^{*}})(aS)^{\frac{Q}{p}},
\end{equation}
where $$\Gamma=\{h\in C([0,1],S^{1,p}(\Omega)):h(0)=1,h(1)=e\}.$$
Indeed, we can choose $v_{1}\in S^{1,p}(\Omega)\backslash\{0\}$ with
$\|v_{1}\|=1$. From \eqref{3.22}, we have
$\lim_{s\rightarrow+\infty}I_{\lambda}(sv_{1})=-\infty.$ Then
$$\sup_{s\geq0}I_{\lambda}(sv_{1})=I_{\lambda}(s_{\lambda}v_{1}),
\ \ \hbox{for some} \ s_{\lambda}>0.$$ So $s_{\lambda}$ satisfies
\begin{equation}\label{3.241}
as_{\lambda}^{p}\|v_{1}\|^{p}+bs_{\lambda}^{2p}\|v_{1}\|^{2p}=\mu s_{\lambda}^{2p}\int_{\Omega}\phi_{v_{1}}|v_{1}|^{p}d\xi+
\lambda s_{\lambda}^{q}\int_{\Omega}|v_{1}|^{q}d\xi+s_{\lambda}^{Q^{*}}\int_{\Omega}|v_{1}|^{Q^{*}}d\xi.
\end{equation}
Next, we shall prove that $\{s_{\lambda}\}_{\mu>0}$ is bounded. In fact,
suppose that the following hypothesis $s_{\lambda}\geq1$ is satisfied
for every $\lambda>0$. Then  it follows from \eqref{3.241} that
\begin{align}\label{3.25}
\begin{split}
(a+b)s_{\lambda}^{2p}&\geq
as_{\lambda}^{p}\|v_{1}\|^{p}+bs_{\lambda}^{2p}\|v_{1}\|^{2p}=\mu
s_{\lambda}^{2p}\int_{\Omega}\phi_{v_{1}}|v_{1}|^{p}d\xi+
\lambda s_{\lambda}^{q}\int_{\Omega}|v_{1}|^{q}d\xi+s_{\lambda}^{Q^{*}}\int_{\Omega}|v_{1}|^{Q^{*}}d\xi\\
&\geq
s_{\lambda}^{Q^{*}}\int_{\Omega}|v_{1}|^{Q^{*}}d\xi.
\end{split}
\end{align}
Since $2p<q<Q^{\ast}$, we can deduce that
$\{s_{\lambda}\}_{\lambda>0}$ is bounded.

Next, we shall demonstrate that $s_{\lambda}\rightarrow0,$ as $\lambda\rightarrow\infty$. Suppose to the contrary, that there exist $s_{\lambda}>0$ and a sequence $(\lambda_{n})_{n}$ with $\lambda_{n}\rightarrow\infty,$ as $n\rightarrow\infty,$ satisfying $s_{\lambda_{n}}\rightarrow s_{\lambda},$ as $n\rightarrow\infty$.
Invoking the Lebesgue dominated convergence theorem, we see that
$$\int_{\Omega}|s_{\lambda_{n}}v_{1}|^{q}d\xi\rightarrow\int_{\Omega}|s_{\lambda}v_{1}|^{q}d\xi,\      \mbox{as}   \   n\rightarrow\infty.$$
It now follows that
$$\lambda_{n}\int_{\Omega}|s_{\lambda}v_{1}|^{q}d\xi\rightarrow\infty, \    \mbox{as}   \   n\rightarrow\infty.$$
Thus, invoking \eqref{3.241}, we can show that this cannot happen.
Therefore, $s_{\lambda}\rightarrow0,$ as $\lambda\rightarrow\infty$.

Furthermore,  \eqref{3.241}  implies that
$$\lim_{\lambda\rightarrow\infty}\lambda\int_{\Omega}|s_{\lambda}v_{1}|^{q}d\xi=0$$
and
$$\lim_{\lambda\rightarrow\infty}\int_{\Omega}|s_{\lambda}v_{1}|^{Q^{*}}d\xi=0.$$
Hence based on the definition of $I_{\lambda}$ and $s_{\lambda}\rightarrow0,$ as $\lambda\rightarrow\infty$, we get that
$$\lim_{\lambda\rightarrow\infty}(\sup_{s\geq0}I_{\lambda}(sv_{1}))=\lim_{\lambda\rightarrow\infty}I_{\lambda}(s_{\lambda}v_{1})=0.$$
So, there is
$\lambda_{1}>0,$ satisfying for every $\lambda>\lambda_{1}$,
$$\sup_{s\geq0}I_{\lambda}(sv_{1})<(\frac{1}{q}-\frac{1}{Q^{*}})(aS)^{\frac{Q}{p}}.$$
Letting $e=t_{1}v_{1}$ with $t_{1}$ large enough for
$I_{\lambda}(e)<0$, we get
$$0<c_{\lambda}\leq\max_{0\leq s\leq1}I_{\lambda}(h(s)),  \   \hbox{where}   \   h(s)=st_{1}v_{1}.$$
Therefore
$$0<c_{\lambda}\leq\sup_{s\geq0}I_{\lambda}(sv_{1})<(\frac{1}{q}-\frac{1}{Q^{*}})(aS)^{\frac{Q}{p}},$$
for $\lambda$ large enough.   This completes the proof of Theorem
\ref{the1.1}(I).
\end{proof}

\subsection{Existence of infinitely many nontrivial weak solutions }

In this subsection, we shall use the
 Krasnoselskii genus theory to prove Theorem \ref{the1.1}(II).  To this end,
let  $E$ be a Banach space and  denote by $\Lambda$ the class of all closed subsets
$A \subset E\backslash \{0\}$ that are
symmetric with respect to the origin, that is,
$ u\in E$ implies $-u\in E$. Moreover, suppose that $X$ is $k$-dimensional and
$X=\hbox{span}\{z_{1},\cdots,z_{k}\}$. For every $n\geq k$, inductively select
$z_{n+1}\notin X_{n}=\hbox{span}\{z_{1},\cdots,z_{n}\}$. Let $R_n = R(Z_n)$
and $\Upsilon_n=B_{R_{n}}\bigcap Z_n$. Define
$$W_n=\{\varphi\in C(\Upsilon_n,E):\varphi|\partial_{B_{R_{n}}\bigcap Z_n}=id \ \mbox{and}\ \varphi \ \mbox{is odd}\}$$
and
$$\Gamma_{i}=\{\varphi(\overline{\Upsilon_n\setminus V}):\varphi\in W_n,n\geq i,V\in\Lambda,\Lambda\ \mbox{is closed},\gamma(V)\leq n-i\},$$
where $\gamma(V)$ is the Krasnoselskii genus of $V$.
\begin{theorem}\label{the4.1}(see   Rabinowitz \cite[Theorem 9.12]{Rabinowitz})
Let $I\in C^{1}(E, \mathbb{R})$ be even with $I(0) = 0$ and let E be an infinite-dimensional Banach space. Assume that $X$
is a finite-dimensional space, $E=X\oplus Y$, and that I satisfies the following properties:\\
(i) There exists $\theta>0$ such that $I$ satisfies $(PS)_{c}$ condition, for every $c\in(0,\theta)$;\\
(ii) There exist $\rho,\alpha>0$ satifying $I(u)\geq \alpha,$ for every $u\in\partial B_{\rho}\bigcap Y$;\\
(iii) For every finite-dimensional subspace $\tilde{E}\subset E$, there exists $R=R(\tilde{E})>\rho$ such that $I(u)\leq0$ on $\tilde{E}\setminus B_{R}$.\\
\noindent For every $i\in N$, let
$c_i=\inf_{X\in\Gamma_{i}}\max_{u\in Z}I(u),$
hence, $0\leq c_{i}\leq c_{i+1}$ and $c_{i}<\theta,$ for every $i>k$. Then every $c_{i}$ is a critical value of $I$. Moreover, if
$c_{i}=c_{i+1}=\cdots=c_{i+p}=c<\theta$ for $i>k$, then
$\gamma(K_{c})\geq p+1$, where
$$K_{c}=\{u\in E:I(u)=c \ \mbox{and}\ I'(u)=0\}.$$
\end{theorem}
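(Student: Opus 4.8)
\emph{The plan} is to prove this by the classical equivariant minimax scheme, using the Krasnoselskii genus $\gamma$ as the topological index. First I would record the four standard properties of $\gamma$ on the class $\Lambda$ of closed symmetric subsets of $E\setminus\{0\}$: (a) monotonicity, i.e. $\gamma(A)\le\gamma(B)$ whenever there is an odd continuous map $A\to B$; (b) subadditivity $\gamma(A\cup B)\le\gamma(A)+\gamma(B)$; (c) the normalization $\gamma(\partial B_R\cap Z)=\dim Z$ for any finite-dimensional subspace $Z$; and (d) the continuity property that every compact $A\in\Lambda$ admits a closed symmetric neighborhood $N$ with $\gamma(N)=\gamma(A)$. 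These are purely topological and do not involve $I$.

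Since $I$ is even, $I'$ is odd, so one can build an odd locally Lipschitz pseudo-gradient field and thereby obtain an equivariant deformation lemma: for each $c\in(0,\theta)$ and each symmetric open neighborhood $U$ of $K_c$, the $(PS)_c$ condition from (i) yields $\varepsilon>0$ and an odd homeomorphism $\eta:=\eta(\cdot,1)$ of $E$ with $\eta(I^{c+\varepsilon}\setminus U)\subset I^{c-\varepsilon}$ and $\eta=\mathrm{id}$ outside $I^{-1}[c-\bar\varepsilon,c+\bar\varepsilon]$, where $I^{d}=\{u\in E:I(u)\le d\}$. The oddness of $\eta$ is exactly what will make the family $\Gamma_i$ invariant under the flow.

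Next I would establish the basic properties of the numbers $c_i$. Monotonicity $c_i\le c_{i+1}$ is immediate from $\Gamma_{i+1}\subset\Gamma_i$. For the lower bound $c_i\ge\alpha>0$ the heart is an \emph{intersection lemma}: for any $\varphi\in W_n$ and any closed symmetric $V$ with $\gamma(V)\le n-i$, the set $\varphi(\overline{\Upsilon_n\setminus V})$ must meet the sphere $\partial B_\rho\cap Y$. This is a Borsuk--Ulam type statement proved by a genus count, using that $\varphi$ is odd and equals the identity on $\partial(B_{R_n}\cap Z_n)$ together with the codimension of $X$ in $E=X\oplus Y$; combined with (ii) it forces $\max I\ge\alpha$ on every member of $\Gamma_i$. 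The bound $c_i<\theta$ for $i>k$ follows from (iii), which controls $\max I$ over the finite-dimensional caps $\Upsilon_n$, together with the dimension count that keeps the minimax values below $\theta$ once $i>k$.

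Finally, each $c_i\in(0,\theta)$ is a critical value and the multiplicity estimate holds. Suppose $c:=c_i=\cdots=c_{i+p}<\theta$ and, for contradiction, $\gamma(K_c)\le p$. By (d) choose a symmetric neighborhood $U$ of $K_c$ with $\gamma(\overline{U})\le p$, and apply the deformation lemma to obtain the odd $\eta$ pushing $I^{c+\varepsilon}\setminus U$ into $I^{c-\varepsilon}$. Taking $B=\varphi(\overline{\Upsilon_n\setminus V})\in\Gamma_{i+p}$ with $\max_B I<c+\varepsilon$, one checks that $\eta(B\setminus U)$ is again admissible with its genus index dropped by at most $p$ via subadditivity, hence lies in $\Gamma_i$, yet satisfies $\max I\le c-\varepsilon<c_i$, a contradiction. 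Thus $\gamma(K_c)\ge p+1$; the case $p=0$ (with $U=\emptyset$) already gives that $c_i$ is critical. The main obstacle is precisely the intersection lemma and the verification that $\eta(B\setminus U)\in\Gamma_i$: both require careful genus/codimension bookkeeping so that the index shifts match, and this is the technical core on which both criticality and the multiplicity estimate rest.
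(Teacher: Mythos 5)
You should first note that the paper itself contains no proof of this statement: it is quoted (with adaptations) from Rabinowitz \cite[Theorem 9.12]{Rabinowitz}, with the classical $(PS)$ condition replaced by the local condition (i). Your sketch reconstructs the standard proof from that source: the four genus properties, the odd (equivariant) deformation lemma built from an odd pseudo-gradient, monotonicity of the $c_i$ from $\Gamma_{i+1}\subset\Gamma_i$, the Borsuk--Ulam type intersection lemma giving $c_i\geq\alpha>0$ for $i>k$, and the final contradiction in which a set $B\in\Gamma_{i+p}$ with $\max_B I<c+\varepsilon$ is replaced by $\eta(\overline{B\setminus U})\in\Gamma_i$, where $\gamma(\overline{U})=\gamma(K_c)\leq p$. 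In outline this is the same route as the cited proof; the points you leave implicit (legitimately, in a sketch) are that the intersection lemma requires $i>k$, and that $\eta(\overline{B\setminus U})\in\Gamma_i$ requires $\eta$ to be the identity on $\partial B_{R_n}\cap Z_n$, which follows from hypothesis (iii) (so that $I\leq 0$ there) together with $\eta=\mathrm{id}$ off $I^{-1}([c-\bar\varepsilon,c+\bar\varepsilon])$ and $c-\bar\varepsilon>0$.

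There is, however, one genuine error: you claim that the bound $c_i<\theta$ for $i>k$ ``follows from (iii) together with the dimension count''. It does not, and cannot: hypothesis (iii) only yields $c_i\leq\max_{u\in\Upsilon_n}I(u)<\infty$, and nothing in (i)--(iii) ties this maximum to the compactness threshold $\theta$, which is simply the largest level below which $(PS)_c$ is known to hold. In this local variant of Rabinowitz's theorem, $c_i<\theta$ is a hypothesis, not a conclusion (the ``hence'' in the statement as printed is misleading on this point), and it is precisely the part that must be verified in applications. Indeed, this is what the paper does: Lemma \ref{lem4.1} produces the $\lambda$-independent majorants $s_n$, and Step 3 of the proof of Theorem \ref{the1.1}(II) chooses $a$ so large that $\sup s_n<(\frac{1}{q}-\frac{1}{Q^{*}})(aS)^{\frac{Q}{p}}=\theta$, following Wei and Wu \cite{Wei}. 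If you instead treat $c_i<\theta$ as automatic, your deformation step is applied at levels where compactness is unknown and the argument breaks down there.
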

\begin{lemma}\label{lem4.1}
There is a nondecreasing sequence $\{s_n\}$ of positive real numbers, independent of $\lambda,$ such that for every $\lambda>0$,
we have
$$c_n^{\lambda}=\inf_{W\in\Gamma_{n}}\max_{u\in W}I_\lambda(u)<s_n,$$
where $\Gamma_{n}$ was defined in Theorem \ref{the4.1}.
\end{lemma}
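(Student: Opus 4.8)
The key observation is that the parameter $\lambda$ enters $I_\lambda$ only through the nonpositive term $-\frac{\lambda}{q}\int_\Omega|u|^q\,d\xi$, so the plan is to dominate $I_\lambda$ by a $\lambda$-free functional and push this domination through the min-max construction. Precisely, since $\lambda>0$ and $\int_\Omega|u|^q\,d\xi\ge 0$, for every $u\in S^{1,p}(\Omega)$ one has $I_\lambda(u)\le I_0(u)$, where
$$I_0(u)=\frac{a}{p}\|u\|^{p}+\frac{b}{2p}\|u\|^{2p}-\frac{\mu}{2p}\int_{\Omega}\phi_u|u|^{p}\,d\xi-\frac{1}{Q^{*}}\int_{\Omega}|u|^{Q^{*}}\,d\xi$$
is obtained by deleting the $\lambda$-term and does not depend on $\lambda$. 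Hence, for each admissible set $W\in\Gamma_n$ we get $\max_{u\in W}I_\lambda(u)\le\max_{u\in W}I_0(u)$, and taking the infimum over $W\in\Gamma_n$ yields $c_n^\lambda\le \tilde{s}_n$, where $\tilde{s}_n:=\inf_{W\in\Gamma_n}\max_{u\in W}I_0(u)$ is manifestly independent of $\lambda$.

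Next I would verify that each $\tilde{s}_n$ is finite and that $\{\tilde{s}_n\}$ is nondecreasing. Every $W=\varphi(\overline{\Upsilon_n\setminus V})$ is the continuous image of $\overline{\Upsilon_n\setminus V}$, a closed and bounded, hence compact, subset of the finite-dimensional space $Z_n$; therefore $W$ is compact in $S^{1,p}(\Omega)$ and, $I_0$ being continuous, $\max_{u\in W}I_0(u)$ is finite and attained. Evaluating at any fixed $W_0\in\Gamma_n$ gives $\tilde{s}_n\le\max_{u\in W_0}I_0(u)<\infty$. For monotonicity, observe that a set realized in $\Gamma_{n+1}$ with parameters $m\ge n+1$ and $\gamma(V)\le m-(n+1)$ automatically satisfies the constraints $m\ge n$ and $\gamma(V)\le m-n$ of $\Gamma_n$, so $\Gamma_{n+1}\subseteq\Gamma_n$; passing to the infimum over a smaller family can only increase the value, giving $\tilde{s}_{n+1}\ge\tilde{s}_n$ (the same inclusion also recovers $c_n^\lambda\le c_{n+1}^\lambda$). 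I would also remark that $\tilde{s}_n$ is in fact strictly positive for $n>k$: by the intersection property underlying Theorem \ref{the4.1}, every such $W$ meets $\partial B_\rho\cap Y$, on which Lemma \ref{lem3.2}(i) gives $I_\lambda\ge\alpha>0$ and hence $I_0\ge I_\lambda\ge\alpha_0>0$ for a $\lambda$-independent $\alpha_0$.

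Finally, to upgrade the a priori weak inequality $c_n^\lambda\le\tilde{s}_n$ to the strict inequality and the positivity demanded by the statement, I would set $s_n:=\max\{\tilde{s}_n,0\}+1$. Then $\{s_n\}$ is a nondecreasing sequence of positive reals independent of $\lambda$, and $c_n^\lambda\le\tilde{s}_n\le\max\{\tilde{s}_n,0\}<\max\{\tilde{s}_n,0\}+1=s_n$ for every $\lambda>0$, as required. I expect the only genuinely technical point to be the finiteness of $\tilde{s}_n$, i.e.\ the compactness of the admissible sets $W$; the decisive simplification, and the reason the bound can be made $\lambda$-free, is the elementary monotonicity $I_\lambda\le I_0$ coming from the sign of the subcritical term $-\frac{\lambda}{q}\int_\Omega|u|^q\,d\xi$.
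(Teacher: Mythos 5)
Your proposal is correct and follows essentially the same route as the paper: the paper also defines $s_n$ as the min-max value of the functional obtained by deleting the nonpositive term $-\frac{\lambda}{q}\int_\Omega|u|^q\,d\xi$, which is manifestly $\lambda$-independent and dominates $I_\lambda$. Your additional care about finiteness (compactness of the admissible sets), the inclusion $\Gamma_{n+1}\subseteq\Gamma_n$ for monotonicity, and the shift by $1$ to turn the a priori non-strict bound into the strict inequality claimed in the statement are all details the paper glosses over, but they do not change the underlying argument.
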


\begin{proof}
By the definition of $\Gamma_{n}$, one has
$$c_n^{\lambda}\leq\inf_{W\in\Gamma_{n}}\max_{u\in W}\{\frac{a}{p}\|u_n\|^{p}+\frac{b}{2p}\|u_n\|^{2p}-\frac{\mu}{2p}\int_{\Omega}\phi_{u_{n}}u_{n}^{p}d\xi-\frac{1}{Q^{*}}\int_{\Omega}|u_{n}|^{Q^{*}}d\xi\}=s_n,$$
therefore $s_n<\infty$ and $s_n\leq s_{n+1}$.
\end{proof}

\begin{proof}[Proof of Theorem \ref{the1.1}(II)] We note that
$I_{\lambda}$ satisfies $I_{\lambda}(0)=0$ and
$I_{\lambda}(-u)=I_{\lambda}(u)$. In the sequel, we shall divide the
proof into the following three steps:

\noindent{\bf Step 1.} We shall  prove that $I_\lambda$ satisfies hypothesis
(ii) of Theorem \ref{the4.1}.
 Indeed, similar to the proof of (i)
in Lemma \ref{lem3.2}, we can easily prove that the energy
functional $I_{\lambda}$ satisfies the hypothesis  $(ii)$ of Theorem
\ref{the4.1}.

\noindent{\bf Step 2.} We shall  prove that $I_\lambda$ satisfies hypothesis
(iii) of Theorem \ref{the4.1}. Indeed, let $Y$ be a
finite-dimensional subspace of $S^{1,p}(\Omega)$. Since all norms in
finite-dimensional space are equivalent, it follows that for every $u
\in Y$,  we have
\begin{align}\label{3.66}
I_{\lambda}(u)\nonumber&\leq\frac{a}{p}\|u\|^{p}+\frac{b}{2p}\|u\|^{2p}-\frac{1}{Q^{*}}\int_{\Omega}|u|^{Q^{*}}d\xi\\
&\leq
\frac{a}{p}\|u\|^{p}+\frac{b}{2p}\|u\|^{2p}-\frac{1}{Q^{*}}C\|u\|^{Q^\ast},
\end{align}
for some positive constant $C > 0$. Also, because of $2p < Q^{*}$,
we can choose a large $R > 0$ such that $I_{\lambda}(u) \leq 0$ on
$S^{1,p}(\Omega)\setminus B_R$. This fact implies that the energy
functional $I_{\lambda}$ satisfies the hypothesis  $(iii)$ of
Theorem \ref{the4.1}.

\noindent{\bf Step 3.} We shall  prove that problem \eqref{1.1} has
infinitely many nontrivial weak solutions. Indeed, applying the
argument in Wei and Wu \cite{Wei}, we can choose $a_1$ large enough
so that for every $a>a_1$,
$$\sup s_n<(\frac{1}{q}-\frac{1}{Q^{*}})(aS)^{\frac{Q}{p}},$$
that is,
$$c_n^{\lambda}<s_n<(\frac{1}{q}-\frac{1}{Q^{*}})(aS)^{\frac{Q}{p}}.$$
 Thus, one has
$$0<c_{1}^{\lambda}\leq c_{2}^{\lambda}\leq\cdots\leq c_{n}^{\lambda}<s_n<(\frac{1}{q}-\frac{1}{Q^{*}})(aS)^{\frac{Q}{p}}.$$
From Lemma \ref{lem3.1}, we know that $I_{\lambda}$  satisfies
$(PS)_{c_i^\lambda} (i=1,2,\cdots,n)$ condition. This fact implies
that the levels $c_{1}^{\lambda}\leq c_{2}^{\lambda}\leq\cdots\leq
c_{n}^{\lambda}$ are critical values of $I_{\lambda}$, which be
guaranteed
 by an application of the Rabinowitz result
\cite[Proposition 9.30]{Rabinowitz}.

If $c_{i}^{\lambda}=c_{i+1}^{\lambda}$ where $i = 1,2,\cdots,k-1$,
then applying the Ambrosetti and Rabinowitz result \cite[Remark 2.12 and
Theorem 4.2]{am}, we see that the set $K_{c_{i}^{\lambda}}$ consists
of infinite number of different points, so problem \eqref{1.1} has
infinite number of weak solutions. Hence, problem \eqref{1.1} has at
least $k$ pairs of solutions. Since $k$ is arbitrary, we can
conclude that problem \eqref{1.1} has infinitely many solutions.
This completes the proof of Theorem \ref{the1.1}(II).
\end{proof}



\subsection*{Acknowledgments}
Song was supported by the National Natural Science Foundation of
China (No. 12001061), the Science and Technology Development Plan
Project of Jilin Province, China (No. 20230101287JC) and Innovation
and Entrepreneurship Talent Funding Project of Jilin Province
(No. 2023QN21). Repov\v{s} was supported by the Slovenian
Research Agency program No. P1-0292 and grants Nos. J1-4031,
J1-4001, N1-0278, N1-0114, and N1-0083.

\subsection*{Authors' ORCID ID's}
\noindent
Shujie Bai: https://orcid.org/0000-0002-0821-2388 \\
Yueqiang Song: https://orcid.org/0000-0003-3570-3956\\
Du\v{s}an D. Repov\v{s}: https://orcid.org/0000-0002-6643-1271

\end{document}